\documentclass{amsart}

\usepackage{amsmath}
\usepackage{amsthm}
\usepackage{amssymb}
\usepackage{amsfonts}
\usepackage{amsrefs}
\usepackage{color}
\usepackage{tikz}

\tikzstyle{shaded}=[fill=red!10!blue!20!gray!30!white]
\tikzstyle{shaded line}=[double=red!10!blue!20!gray!30!white, double distance=1.5mm, draw=black]
\tikzstyle{unshaded}=[fill=white]
\tikzstyle{unshaded line}=[double=white, double distance=1.5mm, draw=black]
\tikzstyle{Tbox}=[circle, draw, thick, fill=white, opaque,]
\tikzstyle{empty box}=[circle, draw, thick, fill=white, opaque, inner sep=2mm]
\tikzstyle{background rectangle}= [fill=red!10!blue!20!gray!40!white,rounded corners=2mm] 
\tikzstyle{on}=[very thick, red!50!blue!50!black]
\tikzstyle{off}=[gray]

\tikzstyle{traces}=[scale=.2, inner sep=1mm]
\tikzstyle{quadratic}=[scale=.4, inner sep=1mm, baseline]
\tikzstyle{annular}=[scale=.7, inner sep=1mm, baseline]
\tikzstyle{make triple edge size}= [scale=.4, inner sep=1mm,baseline] 
\tikzstyle{icosahedron network}=[scale=.3, inner sep=1mm, baseline]
\tikzstyle{ATLsix}=[scale=.25, baseline]
\tikzstyle{TL12}=[scale=.15,baseline]
\tikzstyle{PAdefn}=[scale=.7,baseline]
\tikzstyle{TLEG}=[scale=.5,baseline]


\newtheorem{lemma}{Lemma}[section]
\newtheorem{definition}[lemma]{Definition}
\newtheorem{theorem}[lemma]{Theorem}
\newtheorem{proposition}[lemma]{Proposition}
\newtheorem{remark}[lemma]{Remark}
\newtheorem{corollary}[lemma]{Corollary}
\newtheorem{conjecture}[lemma]{Conjecture}

\newtheorem{question}[lemma]{Question}

\newenvironment{claim}[1]{\par\noindent\underline{Claim:}\space#1}{}
\newenvironment{claimproof}[1]{\par\noindent\underline{Proof:}\space#1}{\hfill $\blacksquare$}

\DeclareMathOperator{\tr}{tr}
\DeclareMathOperator{\id}{id}
\DeclareMathOperator{\im}{im}

\sloppy
\makeatother

\title{Euler totient of subfactor planar algebras}
\author{S\'ebastien Palcoux}
\address{Institute of Mathematical Sciences, Chennai, India}
\email{sebastienpalcoux@gmail.com}
\subjclass[2010]{46L37 (Primary), 05E10, 05E15, 06B15, 20C15, 20D30 (Secondary)}
\keywords{von Neumann algebra; subfactor; planar algebra; biprojection; lattice; M\"obius function; Euler totient; Boolean algebra}

\begin{document}
\maketitle

\begin{abstract}
We extend the Euler's totient function (from arithmetic) to any irreducible subfactor planar algebra, using the M\"obius function of its biprojection lattice, as Hall did for the finite groups. We prove that if it is nonzero then there is a minimal $2$-box projection generating the identity biprojection. We explain a relation with a problem of K.S. Brown. As an application, we define the dual Euler totient of a finite group and we show that if it is nonzero then the group admits a faithful irreducible complex representation. We also get an analogous result at depth $2$, involving the central biprojection lattice.
\end{abstract}

\section{Introduction} 
Any finite group $G$ acts outerly on the hyperfinite ${\rm II}_1$ factor $R$, and the group subfactor $(R\subseteq R\rtimes G)$, of index $|G|$, remembers the group \cite{jo}. Jones proved in \cite{jo2} that the set of possible values for the index $ |M:N| $ of a subfactor $ (N \subseteq M) $ is 
 $$ \{ 4 cos^2(\frac{\pi}{n}) \ \vert \ n \geq 3 \} \sqcup [4,\infty]. $$
By Galois correspondence \cite{nk}, the lattice of intermediate subfactors of $(R\subseteq R\rtimes G)$ is isomorphic to the subgroup lattice of $G$. 
Moreover, Watatani \cite{wa} extended the finiteness of the subgroup lattice to any irreducible finite index subfactor. Then, the subfactor theory can be seen as an augmentation of the finite group theory, where the indices are not necessarily integers. The notion of subfactor planar algebra \cite{jo4} is a diagrammatic axiomatization of the standard invariant of a finite index ${\rm II}_1$ subfactor \cite{js}. Bisch \cite{bi} proved that the intermediate subfactors are given by the biprojections (see Definition \ref{debi}) in the $2$-box space of the corresponding planar algebra. The recent results of Liu \cite{li} on the biprojections are also crucial for this paper (see Section \ref{sectionB}).

The usual Euler's totient function $\varphi(n)$ counts the number of positive integers up to $n$ that are relatively prime to $n$. Let $G$ be a finite group and $\mu$ the M\"obius function (see Section \ref{sectionE}, Definition \ref{debius}) of its subgroup lattice $\mathcal{L}(G)$. Hall proved in \cite{hal} that the Euler totient of $G$ (defined below) is the cardinal of $\{g \in G \ | \ \langle g \rangle = G  \}$. $$\varphi(G):=\sum_{H \in \mathcal{L}(G)}\mu(H,G)|H|$$ So if $\varphi(G)$ is nonzero then $G$ is cyclic, and $\varphi(C_n) = \varphi(n)$. This implication will be generalized in Section \ref{main}, as the author did with Ore's theorem in \cites{p1,p2}. Let $\mathcal{P}$ be an irreducible subfactor planar algebra and $\mu$ the M\"obius function of its biprojection lattice $[e_1,\id]$. We use notations of Definition \ref{index}. Let the Euler totient of $\mathcal{P}$ be $$ \varphi(\mathcal{P}):=\sum_{b \in [e_1,\id]}\mu(b,\id)|b:e_1|. $$

\begin{theorem} \label{mainintro}
If $\varphi(\mathcal{P})$ is nonzero then $\mathcal{P}$ is w-cyclic (i.e. there is a minimal $2$-box projection generating the identity biprojection).
\end{theorem}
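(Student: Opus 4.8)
The plan is to imitate Hall's proof, replacing the regular representation of $G$ by the $2$-box space $P_2$ and the indicator function of the generators by a self-adjoint element whose value under a canonical functional is $\varphi(\mathcal{P})$. Concretely, I would introduce
$$ f := \sum_{b \in [e_1,\id]} \mu(b,\id)\,|b:e_1|\, b \ \in\ P_2, $$
which is self-adjoint because each biprojection $b$ is a positive operator and each coefficient is real. Writing $\omega$ for the canonical functional (the planar-algebra counit, normalized like the augmentation $\epsilon(e_H)=1$ on $\mathbb{C}[G]$) that takes the value $1$ on every biprojection, one gets $\omega(f)=\sum_{b}\mu(b,\id)|b:e_1|=\varphi(\mathcal{P})$. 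Hence if $\varphi(\mathcal{P})\neq 0$ then $f\neq 0$, and since $f$ is a nonzero self-adjoint element there is a minimal $2$-box projection $p$ (taken below a nonzero spectral projection of $f$) with $\tr(fp)\neq 0$.

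It then suffices to show that $\tr(fp)=0$ whenever the biprojection $c:=\langle p\rangle$ generated by $p$ is proper, i.e. $c<\id$: the projection $p$ produced above, having $\tr(fp)\neq 0$, will then be forced to satisfy $\langle p\rangle=\id$, which is exactly w-cyclicity. Expanding, $\tr(fp)=\sum_{b}\mu(b,\id)\,|b:e_1|\,\tr(bp)$. The key structural input I would extract from Liu's biprojection results is a \emph{factorization} of $|b:e_1|\,\tr(bp)$ through the meet $b\wedge c$: since $p\le c=\langle p\rangle$, the overlap $\tr(bp)$ should only see the part of $b$ lying under $c$, and together with the multiplicativity $|b:e_1|=|b:b\wedge c|\,|b\wedge c:e_1|$ this should write $|b:e_1|\,\tr(bp)=|b:b\wedge c|\cdot V(b\wedge c)$ for a suitable function $V$ on $[e_1,c]$. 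Grouping the sum by $a=b\wedge c$ reduces $\tr(fp)$ to $\sum_{a\le c}V(a)\sum_{b:\,b\wedge c=a}\mu(b,\id)\,|b:a|$, and a weighted version of Weisner's theorem on the biprojection lattice should kill every inner sum once $c\neq\id$ (the unweighted prototype being $\sum_{b:\,b\vee c=\id}\mu(b,\id)=0$ for $e_1<c<\id$).

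The main obstacle is precisely this last step: establishing the factorization of $\tr(bp)$ and the accompanying weighted Weisner identity. The unweighted Weisner vanishing is purely combinatorial, but the index weights $|b:e_1|$ destroy the clean $0/1$ behaviour that makes Hall's argument work inside a \emph{commutative} group algebra; a generic minimal projection $p$ does not commute with the biprojections, so $\tr(bp)/\tr(p)$ is a fractional overlap rather than the indicator $[p\le b]$. Controlling this is exactly where Liu's exchange-relation and trace formulas for biprojections are indispensable, and I expect the bulk of the work to lie in proving that $|b:e_1|\,\tr(bp)$ depends on $b$ only through $b\wedge\langle p\rangle$, in the precise multiplicative form needed to invoke the weighted Weisner cancellation.
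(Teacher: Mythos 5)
Your reduction rests entirely on the lemma ``$\tr(fp)=0$ whenever $c=\langle p\rangle<\id$,'' and that lemma is false --- already for group subfactors, already in a commutative $2$-box space, and even under the hypothesis $\varphi(\mathcal{P})\neq 0$. Take $\mathcal{P}=\mathcal{P}(R\subseteq R\rtimes C_4)$, so that $\mathcal{P}_{2,+}\cong\ell^\infty(C_4)$, the biprojections are the indicators $1_H$ of subgroups, the minimal projections are the point masses $\delta_g$, and $\langle\delta_g\rangle=1_{\langle g\rangle}$. Since $\mu(e_1,\id)=0$ and $\mu(1_{C_2},\id)=-1$, your element is
$$ f \;=\; \sum_{b\in[e_1,\id]}\mu(b,\id)\,|b:e_1|\,b \;=\; -2\cdot 1_{C_2}+4\cdot 1_{C_4}, $$
and $\varphi(\mathcal{P})=\varphi(C_4)=2\neq 0$. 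Yet for $p=\delta_{x^2}$ (with $x$ a generator of $C_4$) one has $\langle p\rangle=1_{C_2}<\id$ while $\tr(fp)=f(x^2)/4=1/2\neq 0$. Worse, $\delta_{x^2}$ lies under the spectral projection $1_{C_2}$ of $f$ (eigenvalue $2$), so it is exactly the kind of projection your spectral construction can output, and your argument would then wrongly conclude $\langle p\rangle=\id$. The weighted Weisner identity you hope for fails for the same reason: $\sum_{b\,:\,b\wedge c=c}\mu(b,\id)\,|b:c| = -1+2 = 1\neq 0$ for $c=1_{C_2}$ (and equals $2$ for $G=S_3$, $c=\langle(12)\rangle$). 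Note that this happens in a commutative model where $\tr(bp)/\tr(p)$ \emph{is} the indicator of $p\le b$, so the obstruction is not the noncommutativity you worried about, nor anything Liu's trace formulas could repair: the index weights themselves destroy the cancellation, because $\sum_{b\ge c}\mu(b,\id)\,|b:c|=\varphi(c,\id)$ has no reason to vanish for proper $c$.

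The repair is to put the weight where Hall puts it. In Hall's proof the weight $|H|$ is expanded as a sum over points of $G$, and the M\"obius cancellation $\sum_{H\ni g}\mu(H,G)=0$ (valid when $\langle g\rangle\neq G$) is applied to the \emph{unweighted} inner sums. The paper's proof of Theorem \ref{mainsubfactor} does precisely this: using $|b:e_1|=\delta^2\tr(b)=\delta^2\sum_i\tr(bp_i)$, where $p_1,\dots,p_r$ are the minimal \emph{central} projections of $\mathcal{P}_{2,+}$, it writes
$$ \varphi(\mathcal{P})\;=\;\delta^2\sum_{i=1}^{r}S(i),\qquad S(i)\;=\;\sum_{b\in[e_1,\id]}\mu(b,\id)\,\tr(bp_i), $$
and proves the vanishing lemma for these unweighted sums: $S(i)=0$ whenever $\langle p_i\rangle<\id$. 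This is done via the crosscut Theorem \ref{crosscut}, writing $S(i)=\sum_{\beta\in\mathcal{B}_n}(-1)^{|\beta|}\tr(m(\beta)p_i)$, splitting the coatoms into those that dominate $p_i$ and the rest, and using centrality of $p_i$ to get $\tr(m(\alpha\vee\beta)p_i)=\tr(m(\beta)p_i)$ when every coatom indexed by $\alpha$ dominates $p_i$; the alternating sum over such $\alpha$ factors out $(1-1)^{|A_i|}$, which vanishes unless no coatom contains $p_i$, i.e.\ unless $\langle p_i\rangle=\id$. Hence $\varphi(\mathcal{P})\neq 0$ forces some $\langle p_i\rangle=\id$, and Proposition \ref{mini} descends from the central projection $p_i$ to a minimal projection generating $\id$. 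If you wish to keep a single-element formulation, the right element is the unweighted $g:=\sum_b\mu(b,\id)\,b$ with $\varphi(\mathcal{P})=\delta^2\tr(g)$, paired against minimal central projections; no weighted M\"obius identity is then needed anywhere.
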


\noindent Then, for any finite group $G$, by considering $\mathcal{P}(R^G \subset R)$, we get that if the dual Euler totient  $$\hat{\varphi}(G):=\sum_{H \in \mathcal{L}(G)}\mu(1,H)|G:H|$$ is nonzero then $G$ has a faithful irreducible complex representation. It is a dual version of the initial implication. 
As a general application, we get a non-trivial upper bound for the minimal number of minimal central projections generating the identity biprojection. By applying this result to any finite group $G$, we deduce a non-trivial upper bound for the minimal number of irreducible components for a faithful complex representation of $G$. It involves the subgroup lattice and indices only. It is a link between combinatorics and representations in finite group theory. These results were not known to group theorists and the author can provide a group theoretical translation of the proofs as he did in \cite{p4} for the dual Ore's theorem. 

  This path of research conducts to (Section \ref{SectionK}) a generalization of a problem of K.S. Brown (considered hard in \cite{sw}) and to a possible counter-example of Watatani's problem \cite{wa} on whether any finite lattice has a biprojection lattice representation.

  We finally prove an additional result for the irreducible subfactor planar algebras of depth $2$, involving the central biprojection lattice. The dual group case recovers a known result involving the normal subgroup lattice.

 
\section{Basics on lattice theory}
A \emph{lattice} $(L, \wedge , \vee)$ is a poset $L$  in which every two elements $a,b$ have a unique supremum (or \emph{join}) $a \vee b$ and a unique infimum (or \emph{meet}) $a \wedge b$. Let $G$ be a finite group. The set of subgroups $ K \subseteq G$ forms a lattice,  denoted by $\mathcal{L}(G)$, ordered by $\subseteq$, with $K_1 \vee K_2 = \langle K_1,K_2 \rangle$ and $K_1 \wedge K_2 =  K_1 \cap K_2 $. A \emph{sublattice} of $(L, \wedge , \vee)$ is a subset $L' \subseteq L$ such that $(L', \wedge , \vee)$ is also a lattice. If $a,b \in L$ with $a \le b$, then the \emph{interval} $[a,b]$ is the sublattice $\{c \in L \ \vert \ a \le c \le b \}$. Any finite lattice is \emph{bounded}, i.e. admits a minimum and a maximum, denoted by $\hat{0}$ and $\hat{1}$. The \emph{atoms} are the minima of $L \setminus \{\hat{0}\}$. The \emph{coatoms} are the maxima of $L \setminus \{\hat{1}\}$. Consider a finite lattice, $b$ the join of its atoms and $t$ the meet of its coatoms, then let call $[\hat{0},b]$ and $[t,\hat{1}]$ its \emph{bottom} and \emph{top intervals}. A lattice is \emph{distributive} if the join and meet operations distribute over each other.
\noindent  A distributive bounded lattice is called \emph{Boolean} if any element $b$ admits a unique \emph{complement} $b^{\complement}$ (i.e. $b \wedge b^{\complement} = \hat{0}$ and $b \vee b^{\complement} = \hat{1}$). The subset lattice of $\{1,2, \dots, n \}$, with union and intersection, is called the Boolean lattice $\mathcal{B}_n$ of rank $n$. Any finite Boolean lattice is isomorphic to some $\mathcal{B}_n$.

\begin{remark} \label{atomistic}  \label{veeint}
 A finite lattice is Boolean if and only if it is \emph{uniquely atomistic}, i.e. every element can be written uniquely as a join of atoms. It follows that if $[a,b]$ and $[c,d]$ are intervals in a Boolean lattice, then
 $$[a,b] \vee [c,d] := \{ k \vee k' \ | \ k \in [a,b], k' \in [c,d] \},$$ is the interval $[a \vee c, b \vee d]$.
\end{remark} 
   
\noindent We refer to \cite{sta} for more details. 

\section{Subfactor planar algebras and biprojections} \label{sectionB}
For the notions of subfactor, subfactor planar algebra and basic properties, we refer to \cites{js,jo4,sk2}. See also \cite[Section 3]{pa} for a short introduction. Let $ (N \subseteq M) $ be a finite index irreducible subfactor. The $ n $-box spaces $ \mathcal{P}_{n,+} $ and $ \mathcal{P}_{n,-} $ of the planar algebra $ \mathcal{P}=\mathcal{P}(N \subseteq M) $, are $ N' \cap M_{n-1} $ and $ M' \cap M_{n} $. Let $ R(a) $ be the range projection of $ a \in \mathcal{P}_{2,+} $. We define the relations $ a \preceq b $ by $ R(a) \le R(b) $, and $ a \sim b $ by $ R(a) = R(b) $. Let $ e_1:=e^M_N $ and $ \id:=e^M_M $ be the Jones and the identity projections in $ \mathcal{P}_{2,+} $. Let $\tr$ be the normalized trace (i.e. $ \tr(\id) = 1 $). Then $ \tr(e_1) = |M:N|^{-1} = \delta^{-2} $. Let $ \mathcal{F}: \mathcal{P}_{2,\pm} \to \mathcal{P}_{2,\mp} $ be the Fourier transform ($1$-click or $ 90^{\circ} $ rotation) and let $ a * b = \mathcal{F}(\mathcal{F}^{-1}(a) \cdot \mathcal{F}^{-1}(b)) $ be the coproduct of $ a,b \in \mathcal{P}_{2,\pm} $. If $a,b$ are positive then so is $a * b$ by \cite[Theorem 4.1]{li}.
 
\begin{definition}[\cite{li} Def. 2.14] \label{debi}
A \emph{biprojection} is a nonzero projection $ b \in \mathcal{P}_{2,\pm}$ with $ \mathcal{F}(b) $ a multiple of a projection. 
\end{definition}
Note that $ e_1=e^M_N $ and $ \id=e^M_M $ are biprojections.
\begin{theorem}[\cite{bi} p212] \label{bisch}
A projection $ b \in \mathcal{P}_{2,+}$ is a biprojection if and only if it is the Jones projection $ e^M_K $ of an intermediate subfactor $ N \subseteq K \subseteq M $. 
\end{theorem}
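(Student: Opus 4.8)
The plan is to prove the two implications separately inside the Jones tower $N\subseteq M\subseteq M_1=\langle M,e_1\rangle$, where $\mathcal{P}_{2,+}=N'\cap M_1$ and $e_1=e^M_N$. For an intermediate subfactor $N\subseteq K\subseteq M$ I write $E_K\colon M\to K$ for the trace-preserving conditional expectation and $e_K=e^M_K$ for its Jones projection, the orthogonal projection of $L^2(M)$ onto $L^2(K)$.

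\emph{Intermediate subfactor $\Rightarrow$ biprojection.} First I would note that $e_K\in\mathcal{P}_{2,+}$: as $E_K$ is $K$-bimodular, $e_K$ commutes with $K$, so $e_K\in K'\cap M_1\subseteq N'\cap M_1=\mathcal{P}_{2,+}$. To see that $e_K$ is a biprojection I would show that $\mathcal{F}(e_K)$ is a multiple of a projection, by identifying $\mathcal{F}(e_K)$, up to a positive scalar, with the Jones projection in $\mathcal{P}_{2,-}$ of the dual intermediate subfactor $M\subseteq K_1\subseteq M_1$ produced by the basic construction; being a projection, this gives $e_K$ biprojective via Definition \ref{debi}. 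Equivalently, idempotency of $E_K$ (so that $e_Kxe_K=E_K(x)e_K$ for $x\in M$) yields the reproducing relation $e_K*e_K\propto e_K$, which through the coproduct formula $a*b=\mathcal{F}(\mathcal{F}^{-1}(a)\cdot\mathcal{F}^{-1}(b))$ is precisely the biprojection condition.

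\emph{Biprojection $\Rightarrow$ intermediate subfactor.} Given a biprojection $b$, I would set $K:=\{b\}'\cap M$. Since $b\in N'\cap M_1$, every element of $N$ commutes with $b$, so $N\subseteq K\subseteq M$ and $K$ is a von Neumann subalgebra of $M$; it is a factor because $Z(K)=K\cap K'\subseteq M\cap N'=\mathbb{C}$ by irreducibility of $N\subseteq M$, the same computation giving $N'\cap K=\mathbb{C}$, and $N\subseteq K$ has finite index since $N\subseteq M$ does. The heart of the matter is the identity $b=e_K$. One inequality is soft: every biprojection satisfies $e_1\le b$ (a standard consequence of $b*b\propto b$ and positivity of the coproduct, \cite[Theorem 4.1]{li}), and since $e_K=\bigvee_{k\in\mathcal{U}(K)}ke_1k^*$ while each unitary $k\in K$ commutes with $b$, we get $ke_1k^*\le kbk^*=b$, hence $e_K\le b$.

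\emph{Main obstacle.} The delicate step is the reverse inequality $b\le e_K$, equivalently the trace identity $\tr(b)=\tr(e_K)=|M:K|^{-1}$; once it holds, $b-e_K\ge 0$ has trace zero and so $b=e_K$. This is exactly where the biprojection hypothesis must be used with full force: without it a projection in $N'\cap M_1$ commuting with $K=\{b\}'\cap M$ and dominating $e_1$ need not be a Jones projection. I expect to extract the trace identity from the reproducing relation $b*b=\lambda b$ together with the positivity of the coproduct, which pin down simultaneously the range and the trace of $b$ and thereby force $b=e_K$.
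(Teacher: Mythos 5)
The paper itself gives no proof of this statement: it is quoted directly from Bisch's paper (\cite{bi}, p.~212), so there is no internal argument to compare yours against, and your attempt has to stand on its own. Your forward direction is essentially the standard one and is fine in outline: $e_K\in K'\cap M_1\subseteq N'\cap M_1$, and $\mathcal{F}(e_K)$ is, up to a positive scalar, the Jones projection of the dual intermediate inclusion $M\subseteq\langle M,e_K\rangle\subseteq M_1$, though you assert this identification rather than prove it. One caveat: the alternative you offer --- that $e_K*e_K\propto e_K$ ``is precisely the biprojection condition'' --- is not quite right. That relation only says $\mathcal{F}^{-1}(e_K)$ is a multiple of an \emph{idempotent}; Definition \ref{debi} asks for a multiple of a \emph{projection}, so you also need self-adjointness, which comes from the contragredient invariance $\overline{e_K}=e_K$ (rotation by $180^{\circ}$). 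This is true but must be checked. The same subtlety affects your appeal to ``$e_1\le b$ for every biprojection'': this is indeed a known fact, but it does not follow from positivity of the coproduct (\cite{li}, Theorem 4.1) alone; one needs a range lemma of the form $e_1\preceq x*\overline{x}$ for positive nonzero $x$, combined with $\overline{b}=b$.

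The genuine gap is the backward direction, and it sits exactly where the entire content of Bisch's theorem lies. Defining $K=\{b\}'\cap M$ and proving $e_K\le b$ is correct (granting $e_1\le b$), but the reverse inequality $b\le e_K$ is left as an expectation (``I expect to extract the trace identity\dots''), and the route you indicate cannot work as stated: the reproducing relation $b*b=\lambda b$ with $\lambda=\delta\tr(b)$ holds identically for \emph{every} biprojection and makes no reference whatsoever to $K=\{b\}'\cap M$, so it cannot by itself pin $\tr(b)$ to the value $|M:K|^{-1}$. What is actually needed --- and what Bisch's proof supplies --- is to show that the range of $b$ is the $L^2$-closure of a von Neumann subalgebra of $M$: one proves that $K_0:=bL^2(M)\cap M$ is closed under multiplication (this is where the hypothesis on $\mathcal{F}(b)$ does real work, via a support lemma asserting that products of elements of $M$ lying in the ranges of positive operators $x,y$ land in the range of $x*y$, whence $R(b)R(b)\subseteq R(b*b)=R(b)$), is closed under adjoints (via $\overline{b}=b$), contains $N$ (via $e_1\le b$), and that $b$ is precisely the projection onto $\overline{K_0}^{\,L^2}$; only then does $K_0$ coincide with $\{b\}'\cap M$. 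Without this multiplicative-closure argument your proof does not close; with it, the commutant definition of $K$ and the trace argument become unnecessary.
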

Therefore, the set of biprojections is a lattice of the form $ [e_1,\id] $.

\begin{definition} \label{index}  Consider the intermediate subfactors $ N \subseteq K_1 \subseteq K_2 \subseteq M $, and let $ b_i \in [e_1,\id]$ be the biprojection $ e^M_{K_i}$. We define $ \mathcal{P}(b_1, b_2):= \mathcal{P}(K_1 \subseteq K_2) $ and $$ |b_2:b_1|:= \tr(b_2)/ \tr(b_1)=|K_2:K_1|. $$ 
\end{definition}

\begin{definition} \label{gener}
Consider $a \in \mathcal{P}_{2,+}$ positive, and let $p_n$ be the range projection of $\sum_{k=1}^n a^{*k}$. By finiteness, there exists $N$ such that for all $m \ge N$, $p_m = p_N$, which is a biprojection \cite[Lemma 4.14]{li}, denoted $\langle a \rangle$, called the \emph{biprojection generated by $a$}. It is the smallest biprojection $b \succeq a$. For $S$ a finite set of positive elements of $\mathcal{P}_{2,+}$, let  $\langle S \rangle $ be $ \langle \sum_{s \in S}s \rangle$.
\end{definition}

\begin{proposition}[\cite{p1}] \label{mini}
Let $p \in \mathcal{P}_{2,+}$ be a minimal central projection. Then, there exists $u \le p$ minimal projection such that $\langle u \rangle = \langle p \rangle$.
\end{proposition}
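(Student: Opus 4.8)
The plan is to reduce the statement to a covering argument inside a single matrix block of the finite-dimensional algebra $\mathcal{P}_{2,+}$. Since the subfactor is irreducible of finite index, $\mathcal{P}_{2,+} = N' \cap M_1$ is a finite-dimensional $C^*$-algebra, and a minimal central projection $p$ cuts out a simple summand $B := p\mathcal{P}_{2,+}p \cong M_n(\mathbb{C})$ with unit $p$. The minimal projections $u \le p$ are then exactly the rank-one projections of $B$, in bijection with the lines of $\mathbb{C}^n$. Writing $b := \langle p \rangle$, I first record the monotonicity of the generation map: if $u \le p$ then $R(u) = u \le p = R(p)$, so $u \preceq p$, and hence $\langle u \rangle \le \langle p \rangle = b$, because by Definition \ref{gener} the element $\langle a \rangle$ is the smallest biprojection $\succeq a$.

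Next I would argue by contradiction, assuming $\langle u \rangle \ne b$, hence $\langle u \rangle < b$, for every minimal $u \le p$. By Watatani's finiteness the interval $[e_1,b]$ is a finite lattice, so $b$ has finitely many coatoms $c_1,\dots,c_k$, and every element strictly below $b$ lies under some $c_j$. Thus for each minimal $u \le p$ there is an index $j$ with $\langle u \rangle \le c_j$; since biprojections are projections this yields $u = R(u) \le R(c_j) = c_j$, and combining with $u \le p$ (and using that $p$ is central, so $c_j p = c_j \wedge p$ is a projection) gives $u \le q_j := c_j p \le p$.

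I then claim each $q_j$ is a proper subprojection of $p$ in $B$: if $q_j = p$ then $p \le c_j$, whence $b = \langle p\rangle \le c_j < b$, a contradiction. So each $q_j$ corresponds to a proper subspace $V_j \subsetneq \mathbb{C}^n$, and the covering just obtained says that every line of $\mathbb{C}^n$ lies in some $V_j$, i.e. $\bigcup_{j=1}^k V_j = \mathbb{C}^n$. This is impossible, because a vector space over the infinite field $\mathbb{C}$ is never a finite union of proper subspaces. The contradiction produces a minimal $u \le p$ with $\langle u \rangle = b = \langle p \rangle$, as required.

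The main obstacle is setting up this translation cleanly: checking that the identification $B \cong M_n(\mathbb{C})$ sends the minimal projections below $p$ precisely to lines, that $\langle \cdot \rangle$ is $\preceq$-monotone, and above all that each $q_j$ is a \emph{strict} subprojection of $p$ — without which the covering argument would collapse. Once these points are secured, the finite-union-of-subspaces fact closes the argument at once.
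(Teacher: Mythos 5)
Your proof is correct and is essentially the paper's own argument (given in \cite{p1}, to which the proposition is credited): cut down to the matrix block $p\mathcal{P}_{2,+}p\cong M_n(\mathbb{C})$, use centrality of $p$ together with Watatani finiteness to cover the minimal projections below $p$ by the proper subprojections $c_jp$ coming from biprojections strictly below $\langle p\rangle$, and conclude because $\mathbb{C}^n$ is not a finite union of proper subspaces over the infinite field $\mathbb{C}$. The only cosmetic difference is that you index the covering by the coatoms of $[e_1,\langle p\rangle]$ rather than by all biprojections strictly below $\langle p\rangle$, which changes nothing.
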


\begin{definition}[\cite{p1}]  A planar algebra $\mathcal{P}$ is \emph{weakly cyclic} (or \emph{w-cyclic}) if it satisfies one of the following equivalent assertions:  
\begin{itemize}
\item $\exists u \in  \mathcal{P}_{2,+}$  minimal projection  such that $\langle u \rangle=\id$,
\item $\exists p \in  \mathcal{P}_{2,+}$  minimal central projection  such that $\langle p \rangle=\id$.
\end{itemize}
\end{definition}

\section{Euler totient} \label{sectionE}
We define a notion of Euler totient on the irreducible subfactor planar algebras as an extension of the usual Euler's totient function on the positive integers.

\begin{definition} \label{debius}
The \emph{M\"obius function} $\mu$ of a finite poset $P$ is defined inductively as follows. For $a,b \in P$ with $a \le b$,  
 $$\mu(a,b):= \begin{cases}
            \hspace{.5cm} 1 \hspace{.5cm} \text{if}~ a=b,\\
             - \sum_{c \in (a,b]}\mu(c,b) \hspace{.5cm} \text{otherwise}.
            \end{cases} $$
\end{definition}
The following result can be seen as a Boolean representation for the M\"obius function of a finite lattice.

\begin{theorem}[Crosscut Theorem] \label{crosscut}
Let $L$ be a finite lattice and $a_1 , \dots , a_n$ its coatoms. Consider the (order-reversing) map $m: \mathcal{B}_n \to L$ such that
$$m(I) =  \begin{cases}
            \hspace{.5cm} \hat{1} \hspace{.5cm} \text{if}~ I=\emptyset,\\
             \bigwedge_{i \in I}a_i \hspace{.5cm} \text{otherwise}.
            \end{cases}  $$
Then for any $a \in L$, $$\mu(a,\hat{1}) = \sum_{I \in m^{-1}(\{a\})} (-1)^{|I|}$$
\end{theorem}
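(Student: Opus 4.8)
The plan is to prove the identity by showing that the right-hand side, as a function of $a$, obeys the same defining recurrence as $\mu(\cdot,\hat1)$, and then to invoke uniqueness. Set $\sigma(a):=\sum_{I\in m^{-1}(\{a\})}(-1)^{|I|}$; the goal is $\sigma(a)=\mu(a,\hat1)$ for all $a\in L$. By Definition \ref{debius}, $\mu(\cdot,\hat1)$ is the \emph{unique} function $f\colon L\to\mathbb{Z}$ satisfying $\sum_{c\ge a}f(c)=1$ if $a=\hat1$ and $0$ otherwise (one solves this linear system by downward induction starting from $f(\hat1)=1$, which reproduces exactly the recurrence $\mu(a,\hat1)=-\sum_{c\in(a,\hat1]}\mu(c,\hat1)$). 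Hence it suffices to verify that $\sigma$ satisfies the same relation.

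The main step is a combinatorial reorganization of the summation. Interchanging the order of summation gives $\sum_{c\ge a}\sigma(c)=\sum_{I\,:\,m(I)\ge a}(-1)^{|I|}$, so I would identify the index set $\{I:m(I)\ge a\}$. For $I\neq\emptyset$ one has $\bigwedge_{i\in I}a_i\ge a$ if and only if $a\le a_i$ for every $i\in I$, since the meet is the infimum; and $m(\emptyset)=\hat1\ge a$ always holds. Therefore $\{I:m(I)\ge a\}$ is precisely the collection of \emph{all} subsets of $J_a:=\{i:a\le a_i\}$, the set of coatoms lying above $a$. This yields $\sum_{c\ge a}\sigma(c)=\sum_{I\subseteq J_a}(-1)^{|I|}=(1-1)^{|J_a|}$, which equals $1$ when $J_a=\emptyset$ and $0$ otherwise.

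It then remains to check that $J_a=\emptyset$ if and only if $a=\hat1$. If $a=\hat1$ then no coatom can lie above it, so $J_a=\emptyset$; conversely, if $a<\hat1$ then by finiteness the nonempty set $\{c:a\le c<\hat1\}$ has a maximal element, which is readily seen to be a coatom above $a$, so $J_a\neq\emptyset$. Combining the last two steps gives $\sum_{c\ge a}\sigma(c)=1$ when $a=\hat1$ and $0$ otherwise, matching the recurrence for $\mu(\cdot,\hat1)$; uniqueness then forces $\sigma=\mu(\cdot,\hat1)$. I expect the only delicate points to be the bookkeeping around $I=\emptyset$ (because $m(\emptyset)$ is defined separately as $\hat1$) and the lattice fact that every element below $\hat1$ lies under some coatom, for which finiteness is essential; the binomial cancellation and the uniqueness of the Möbius recurrence are routine.
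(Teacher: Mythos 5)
Your proposal is correct, but it takes a genuinely different route from the paper. The paper's entire proof of Theorem \ref{crosscut} is a one-line citation: it observes that the statement is a reformulation of Stanley's crosscut corollary \cite[Corollary 3.9.4]{sta} applied to the interval $[a,\hat{1}]$ regarded as a lattice with its own coatoms. You instead give a self-contained argument from Definition \ref{debius}: you characterize $\mu(\cdot,\hat{1})$ as the unique solution of the triangular system $\sum_{c\ge a}f(c)=\delta_{a,\hat{1}}$, and you verify that $\sigma(a)=\sum_{I\in m^{-1}(\{a\})}(-1)^{|I|}$ satisfies the same system by interchanging summations, identifying $\{I: m(I)\ge a\}$ with the power set of $J_a=\{i: a\le a_i\}$, and invoking the cancellation $\sum_{I\subseteq J_a}(-1)^{|I|}=(1-1)^{|J_a|}$ together with the fact that a finite lattice has a coatom above every $a<\hat{1}$. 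Each step checks out, including the edge cases ($I=\emptyset$ and $a=\hat{1}$). What your approach buys: it makes the theorem independent of the external reference, and it handles explicitly the translation that the citation leaves to the reader --- namely that any $I$ with $m(I)\ge a$ automatically consists of coatoms lying above $a$, and that the coatoms of the sublattice $[a,\hat{1}]$ are exactly the coatoms of $L$ above $a$ (your maximality-by-finiteness argument is precisely what justifies this). What the paper's approach buys is brevity and deference to a standard reference, at the cost of leaving that reformulation implicit.
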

\begin{proof}
It is a reformulation of \cite[Corollary 3.9.4]{sta} on $[a,\hat{1}]$ with its coatoms.
\end{proof}

\begin{definition}
Let $\mathcal{P}$ be an irreducible subfactor planar algebra, with biprojection lattice $[e_1,\id]$ and M\"obius function $\mu$. For any $b_1,b_2 \in [e_1,\id]$ with $b_1 \le b_2$, consider 
$$ \varphi(b_1,b_2)  :=  \sum_{b \in [b_1,b_2]} \mu(b,b_2) |b:b_1| $$ The \emph{Euler totient} of $\mathcal{P}$  is $\varphi(\mathcal{P}) := \varphi(e_1,\id)$.
\end{definition}

\begin{remark} \label{rever} Let $\hat{\mathcal{P}}$ be the dual subfactor planar algebra of $\mathcal{P}$. Then
$$ \varphi(\mathcal{P}) =  \sum_{b \in [e_1,\id]} \mu(b,\id) |b:e_1| \  \text{ and } \  \varphi(\hat{\mathcal{P}}) =  \sum_{b \in [e_1,\id]} \mu(e_1,b) |\id:b|, $$
because the biprojection lattice of $\hat{\mathcal{P}}$ is the reversed from that of $\mathcal{P}$.
\end{remark}

\begin{lemma}[\cite{hal}] \label{card}
The Euler totient of a finite group $G$,
$$\varphi(G):=\sum_{H \in \mathcal{L}(G)} \mu(H,G)|H|, $$
is the cardinal of $\{g \in G \ | \   \langle g \rangle = G \}$.
\end{lemma} 
\begin{proof} By Theorem \ref{crosscut} with its map $m:  \mathcal{B}_n \to \mathcal{L}(G)$, 
$$\varphi(G) =  \sum_{H \in \mathcal{L}(G)} \sum_{\alpha \in m^{-1}(\{H\})} (-1)^{|\alpha|}|H| = \sum_{\alpha \in \mathcal{B}_n} (-1)^{|\alpha|} |m(\alpha)|.$$
Then, by the inclusion-exclusion principle, $\varphi(G) = |G \setminus \bigcup_i m(\{i\})|.$
\end{proof}

\begin{corollary} \label{Eulercyclic} A finite group $G$ is cyclic if and only if $ \varphi(G)$ is nonzero.
\end{corollary}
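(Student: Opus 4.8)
The plan is to derive this as an immediate corollary of Lemma \ref{card}, so that essentially no new work is required beyond a short logical argument. By Lemma \ref{card}, $\varphi(G)$ equals the cardinality of the set $\{g \in G \mid \langle g \rangle = G\}$ of generators of $G$. The key observation I would record first is that, although the defining expression $\sum_{H} \mu(H,G)|H|$ is an alternating sum of M\"obius contributions and could a priori be any integer (positive, negative, or zero), Lemma \ref{card} identifies it with a cardinality; hence $\varphi(G)$ is in fact a nonnegative integer, and ``nonzero'' is equivalent to ``positive'' and to ``the generator set is nonempty.''

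From here I would split the proof into the two implications of the equivalence. For one direction, if $G$ is cyclic then by definition there exists $g \in G$ with $\langle g \rangle = G$, so the generator set is nonempty and its cardinality $\varphi(G)$ is at least $1$, hence nonzero. For the converse, if $\varphi(G)$ is nonzero then, being the cardinality of a finite set, it is positive, so the generator set contains some element $g$ with $\langle g \rangle = G$; the existence of such a $g$ is exactly the statement that $G$ is cyclic. Combining the two directions gives the claimed equivalence.

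I do not anticipate any genuine obstacle: all the substance has already been absorbed into Lemma \ref{card} (and, behind it, the Crosscut Theorem together with inclusion--exclusion). The only point meriting care is the one flagged above, namely that one must invoke the cardinality interpretation in order to rule out the spurious possibility of a negative value, so that the vanishing of $\varphi(G)$ faithfully detects the \emph{absence} of a generator rather than a mere cancellation of terms.
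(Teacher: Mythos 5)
Your proof is correct and follows essentially the same route as the paper: both directions rest on Lemma \ref{card}, with the converse argued identically. The only cosmetic difference is in the forward direction, where the paper invokes $\varphi(C_n)=\varphi(n)\neq 0$ while you note directly that a cyclic group has at least one generator, so the cardinality from Lemma \ref{card} is positive --- the same substance either way.
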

\begin{proof} If $G$ is cyclic then $G = C_n$ for some positive integer $n$, and $\varphi(G)  = \varphi(n) \neq 0$. Conversely, if $ \varphi(G) \neq 0$, then $G$ is cyclic by Lemma \ref{card}.
\end{proof}

Note that $\varphi(\mathcal{P}(R \subseteq R \rtimes G))=\varphi(G)$ and $\varphi(C_n) = \varphi(n)$, the usual Euler's totient function of $n$. Thus, we can see $\mathcal{P} \mapsto \varphi(\mathcal{P})$ as an extension from the positive integers to the irreducible subfactor planar algebras.  The following proposition extends the usual formula that if $n = \prod_i p_i^{n_i}$ is the prime factorization of $n$, then $$\varphi(n) = \prod_ip_i^{n_i-1} \cdot  \prod_i(p_i-1).$$   

\begin{proposition} \label{topinterval}
Let $[t,\id]$ be the top interval of $[e_1,\id]$. Then $$\varphi(\mathcal{P}) = |t:e_1| \cdot \varphi(t,\id).$$
\end{proposition}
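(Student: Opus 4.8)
The plan is to restrict the defining sum of $\varphi(\mathcal{P})$ to the top interval $[t,\id]$ by showing that the M\"obius function kills every term outside it, and then to factor the index $|b:e_1|$ through $t$ on the surviving terms.

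First I would prove the key vanishing: for every biprojection $b\in[e_1,\id]$ with $b\not\ge t$ one has $\mu(b,\id)=0$. This comes straight from the Crosscut Theorem (Theorem \ref{crosscut}). Let $a_1,\dots,a_n$ be the coatoms of $[e_1,\id]$ and $m:\mathcal{B}_n\to[e_1,\id]$ the associated order-reversing map. For nonempty $I$ we have $m(I)=\bigwedge_{i\in I}a_i\ge\bigwedge_{i=1}^n a_i=t$, and $m(\emptyset)=\id\ge t$, so $m(I)\ge t$ for all $I$. Hence if $b\not\ge t$ then $m^{-1}(\{b\})=\emptyset$, and the Crosscut formula gives $\mu(b,\id)=\sum_{I\in m^{-1}(\{b\})}(-1)^{|I|}=0$. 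Therefore $$\varphi(\mathcal{P})=\sum_{b\in[e_1,\id]}\mu(b,\id)|b:e_1|=\sum_{b\in[t,\id]}\mu(b,\id)|b:e_1|.$$

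Next I would use the multiplicativity of the trace ratio. For $b\in[t,\id]$, $$|b:e_1|=\frac{\tr(b)}{\tr(e_1)}=\frac{\tr(b)}{\tr(t)}\cdot\frac{\tr(t)}{\tr(e_1)}=|b:t|\cdot|t:e_1|,$$ so pulling out the constant factor $|t:e_1|$ yields $$\varphi(\mathcal{P})=|t:e_1|\sum_{b\in[t,\id]}\mu(b,\id)|b:t|=|t:e_1|\cdot\varphi(t,\id),$$ the last sum being exactly the definition of $\varphi(t,\id)$; here one uses that $\mu(b,\id)$ depends only on the interval $[b,\id]$ and so is unchanged whether computed inside $[e_1,\id]$ or inside $[t,\id]$.

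The only substantive step is the vanishing of $\mu(b,\id)$ off the top interval, and I expect it to present no real difficulty, since it is an immediate reading of the Crosscut Theorem once one observes that every meet of coatoms lies above their common meet $t$. The remainder is elementary bookkeeping with trace ratios.
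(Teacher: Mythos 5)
Your proposal is correct and follows essentially the same route as the paper: vanishing of $\mu(b,\id)$ off the top interval via the Crosscut Theorem (the paper likewise notes $m^{-1}(\{b\})=\emptyset$), followed by multiplicativity of the index $|b:e_1|=|b:t|\cdot|t:e_1|$. Your write-up merely fills in the details the paper leaves implicit, namely that every meet of coatoms lies above $t$ and that the index multiplicativity is immediate from the trace-ratio definition $|b_2:b_1|=\tr(b_2)/\tr(b_1)$.
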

\begin{proof}
If $b \not \in [t,\id]$ then $\mu(b,\id)=0$ by Theorem \ref{crosscut} because $m^{-1}(\{b\}) = \emptyset$. Next if $b \in [t,\id]$ then $|b:e_1| = |b:t| \cdot |t:e_1|$, since the index is multiplicative \cite{jo2}.
\end{proof}

\section{Main result} \label{main}
In this section, we generalize one way of Corollary \ref{Eulercyclic} by the following theorem. We will see later with Remark \ref{conv} that the converse is false in general.

\begin{theorem} \label{mainsubfactor}
An irreducible subfactor planar algebra $\mathcal{P}$ is w-cyclic if its Euler totient $\varphi(\mathcal{P})$ is nonzero.
\end{theorem}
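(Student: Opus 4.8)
The plan is to generalize Hall's proof of Lemma \ref{card}, turning $\varphi(\mathcal{P})$ into a nonnegative ``count'' of generating minimal projections via the Crosscut Theorem \ref{crosscut}. Writing $a_1,\dots,a_n$ for the coatoms of $[e_1,\id]$ and $m\colon\mathcal{B}_n\to[e_1,\id]$ for the associated meet map, I would first regroup the defining sum by the fibres of $m$ and use $|b:e_1|=\delta^{2}\tr(b)$:
$$\varphi(\mathcal{P})=\sum_{b\in[e_1,\id]}\mu(b,\id)\,|b:e_1|=\sum_{I\subseteq\{1,\dots,n\}}(-1)^{|I|}\,|m(I):e_1|=\delta^{2}\,\tr(y),\qquad y:=\sum_{b\in[e_1,\id]}\mu(b,\id)\,b.$$
Thus $\varphi(\mathcal{P})\neq 0$ iff $\tr(y)\neq 0$, and it suffices to prove the contrapositive: if $\mathcal{P}$ is not w-cyclic then $\tr(y)=0$.

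The guiding model is the group case $\mathcal{P}(R\subseteq R\rtimes G)$, where the $2$-box algebra is the function algebra on $G$, each biprojection $b$ is the indicator of a subgroup $H$, and $y$ evaluates at $g\in G$ to $\sum_{H\supseteq\langle g\rangle}\mu(H,G)$, which by M\"obius inversion equals $1$ if $\langle g\rangle=G$ and $0$ otherwise; so $y$ is the indicator of the generators and $\tr(y\,u)$ vanishes on every point mass $u$ failing to generate. I would transport this: fix an orthogonal decomposition $\id=\sum_j u_j$ into minimal projections, write $\tr(y)=\sum_j\tr(y\,u_j)$, and try to prove that $\tr(y\,u_j)=0$ whenever $\langle u_j\rangle\neq\id$. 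The combinatorial input making an inclusion--exclusion available is that the lattice meet of biprojections agrees with their projection infimum (\ref{debi}, \ref{index}), so for a minimal projection $u$ one has $u\preceq m(I)$ iff $u\preceq a_i$ for all $i\in I$; and $\langle u\rangle\neq\id$ means precisely that $u\preceq a_\ell$ for some coatom $a_\ell$ (Definition \ref{gener}).

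The hard part is exactly where the group argument fails: the coatom biprojections need not commute as projections, so $\tr(b\,u)$ is no longer $\{0,\tr(u)\}$-valued and the block-wise alternating sum $\sum_I(-1)^{|I|}\tr(m(I)\,u)$ is genuinely delicate for $n\ge 3$. I expect this step to be the main obstacle, and to require the operator-algebraic biprojection machinery of \cite{li} (the coproduct $*$, Liu's positivity, and the multiplicative structure of biprojections under $*$) rather than the trace alone. Two structural reductions should tame it: Proposition \ref{topinterval} lets me pass to the top interval and assume $\bigwedge_i a_i=e_1$, so that a proper biprojection lies under exactly the relevant coatoms; and Proposition \ref{mini} lets me realize $\langle u\rangle$ through a minimal \emph{central} projection $p$ with $\langle p\rangle=\langle u\rangle$, replacing the ill-behaved pairing $\tr(b\,u)$ by the conditional expectation of $b$ onto the block of $p$, which is central and hence compatible with the lattice.

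Granting the vanishing $\tr(y\,u)=0$ for every non-generating minimal projection, the contrapositive is immediate: if $\mathcal{P}$ is not w-cyclic then every $u_j$ fails to generate $\id$, whence $\tr(y)=\sum_j\tr(y\,u_j)=0$ and $\varphi(\mathcal{P})=0$. The converse implication is not expected to hold in general (Remark \ref{conv}), so only this direction is proved.
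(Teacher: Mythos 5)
Your proposal has the right skeleton (contrapositive, crosscut expansion, splitting the trace into blocks, vanishing on non-generating blocks), but the entire content of the theorem sits in the step you explicitly defer: ``Granting the vanishing $\tr(y\,u)=0$ for every non-generating minimal projection'' is precisely what has to be proved, and the route you sketch toward it would not close the gap. Two concrete problems. First, you decompose $\id=\sum_j u_j$ into minimal projections; for a non-central minimal projection $u$, compression by $u$ does not interact with the lattice structure at all: $bu$ is not a projection, $\tr(bu)$ is not determined by which biprojections dominate $u$, and, as you yourself note, the alternating sum $\sum_I(-1)^{|I|}\tr(m(I)u)$ has no visible reason to vanish. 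Second, your proposed repair via Proposition \ref{mini} runs backwards: that proposition produces, for a given minimal \emph{central} projection $p$, \emph{some} minimal $u\le p$ with $\langle u\rangle=\langle p\rangle$; it does not let you replace an arbitrary minimal $u_j$ in your decomposition by a central projection generating the same biprojection (for a general minimal $u\le p$ one only has $\langle u\rangle\le\langle p\rangle$).

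The missing idea also does not require Liu's operator-algebraic machinery (coproduct, positivity), contrary to your expectation; it is elementary once the decomposition is taken over the minimal \emph{central} projections $p_1,\dots,p_r$ from the start, which is exactly what the paper does, using the second characterization of w-cyclicity ($\exists i$ with $\langle p_i\rangle=\id$). Set $S(i)=\sum_{b}\mu(b,\id)\tr(bp_i)$, so that $\varphi(\mathcal{P})=\delta^2\sum_i S(i)$, and expand $S(i)$ by the crosscut map $m$. Since $p_i$ is central, multiplication by $p_i$ commutes with everything and preserves meets of projections; hence, writing $A_i$ for the set of atoms of $\mathcal{B}_n$ indexing coatoms above $p_i$ and $B_i$ for the rest, one gets $m(\alpha\vee\beta)p_i=m(\alpha)p_i\wedge m(\beta)p_i=m(\beta)p_i$ whenever $p_i\le m(\alpha)$. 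Factoring $\mathcal{B}_n=[\emptyset,\alpha_i]\vee[\emptyset,\beta_i]$ (Remark \ref{veeint}) then yields $S(i)=(1-1)^{|A_i|}\,T(i)$ with $T(i)=\sum_{\beta\in[\emptyset,\beta_i]}(-1)^{|\beta|}\tr(m(\beta)p_i)$, which vanishes as soon as some coatom lies above $p_i$, i.e. as soon as $\langle p_i\rangle\neq\id$. This Boolean factorization --- not conditional expectations, not the coproduct --- is the key step your proposal is missing.
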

\begin{proof} 
Let $p_1, \dots, p_r$ be the minimal central projections of $\mathcal{P}_{2,+}$. Consider the sum $$S(i):= \sum_{b \in [e_1,\id]} \mu(b,\id)   \tr(bp_i).$$
By Theorem \ref{crosscut}  with its map $m:  \mathcal{B}_n \to [e_1,\id]$,
$$ S(i)= \sum_{b \in [e_1,\id]} \sum_{\beta \in m^{-1}(\{b\})} (-1)^{|\beta|}   \tr(bp_i)= \sum_{\beta \in \mathcal{B}_n} (-1)^{|\beta|}   \tr(m(\beta)p_i).$$

\noindent Recall that the map $m$ is order-reversing and the image of the atoms of $\mathcal{B}_n$ are the coatoms of $[e_1,\id]$; let call these latter ones $b_1, \dots , b_n$. Let $A_i$ be the set of atoms $\alpha $ of $ \mathcal{B}_n$ satisfying $p_i \le m(\alpha)$, and  $B_i$ the set of atoms not in $A_i$. Let $\alpha_i$ (resp. $\beta_i$) be the join of all the elements of $A_i$ (resp. $B_i$). 

\begin{claim}
For $\alpha \in \mathcal{B}_n$, $p_i \le m(\alpha) \Leftrightarrow \alpha \in [\alpha_i,\hat{1}]$.
\end{claim}
\begin{claimproof}
Just observe that $p_i \le \bigwedge_{j \in \alpha}b_j$ if and only if $\forall j \in \alpha , \ p_i \le b_j$.
\end{claimproof} \\ Now by Remark \ref{veeint}, we have 
$$\mathcal{B}_n = [\emptyset,\alpha_i] \vee [\emptyset,\beta_i]= \bigsqcup_{\alpha \in [\emptyset,\alpha_i]} \alpha \vee [\emptyset,\beta_i],$$
Consider the following sum
$$T(i):=\sum_{\beta \in [\emptyset,\beta_i]} (-1)^{|\beta|}  \tr(m(\beta)p_i)$$ For any $\alpha \in [\emptyset,\alpha_i]$ and $\beta \in [\emptyset,\beta_i]$, we have $(-1)^{|\alpha \vee \beta|} = (-1)^{|\alpha|}(-1)^{|\beta|}$ and   $$m(\alpha \vee \beta)p_i = m(\alpha)p_i \wedge m(\beta)p_i = m(\beta)p_i.$$ So we get that 
$$S(i)=\sum_{\alpha \in [\emptyset,\alpha_i]} (-1)^{|\alpha|} T(i) = T(i) \cdot (1-1)^{|A_i|}.$$ 

\begin{claim}
The planar algebra $\mathcal{P}$ is w-cyclic if and only if $\exists i$ with $|A_i| = 0$.
\end{claim}
\begin{claimproof}
It is w-cyclic if and only if $\exists i$ with $\langle p_i \rangle = \id$, if and only if for any coatom $b \in [e_1,\id]$, $p_i \not \le b$, if and only if $|A_i| = 0$.
\end{claimproof} 
   
\noindent If $\mathcal{P}$ is not w-cyclic, then $\forall i \ |A_i| \neq 0$, so $S(i) = 0$;
 but $|b:e_1| = \tr(b)/\tr(e_1)$,   $\tr(b) = \sum_i \tr(bp_i)$  and $\tr(e_1) = \delta^{-2}$, so $\varphi(e_1,\id) = \delta^2 \sum_{i=1}^r S(i) = 0$.
\end{proof} 

\noindent It is a double generalization of \cite[Theorem 3.10]{bp} from groups to subfactors and from Boolean lattice to any lattice. It is also a purely combinatorial criterion for a subfactor planar algebra to be w-cyclic. 

\section{Generalization of a problem of K.S. Brown} \label{SectionK} 
We will explain how this path of research conducts to a generalization of a problem of K.S. Brown in finite group theory and to a possible counter-example of the problem of Watatani on representing any finite lattice as a biprojection lattice.  

\begin{proposition} \label{central}
Assume that every biprojection is central. Then $$\varphi(\mathcal{P}) = \delta^2\sum_{\langle p_i \rangle = \id} \tr(p_i)$$
with $p_1, \dots , p_r$ the minimal central projections of $\mathcal{P}_{2,+}$.
\end{proposition}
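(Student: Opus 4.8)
The plan is to specialize the computation already carried out in the proof of Theorem~\ref{mainsubfactor}. Recall that there we introduced $S(i) := \sum_{b \in [e_1,\id]} \mu(b,\id)\tr(bp_i)$ and established the key factorization $S(i) = T(i)\cdot(1-1)^{|A_i|}$, where $A_i$ is the set of atoms $\alpha$ of $\mathcal{B}_n$ with $p_i \le m(\alpha)$. Moreover we showed that $\varphi(\mathcal{P}) = \delta^2\sum_{i=1}^r S(i)$, and that $|A_i| = 0$ if and only if $\langle p_i \rangle = \id$. The whole point of the hypothesis ``every biprojection is central'' is that it should force $S(i)$ to collapse to a single clean term, so that the only surviving contributions come from the indices $i$ with $\langle p_i \rangle = \id$.

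The main step is therefore to evaluate $S(i)$ for those $i$ with $|A_i| = 0$. In that case $(1-1)^{|A_i|} = (1-1)^0 = 1$, so $S(i) = T(i) = \sum_{\beta \in [\emptyset,\beta_i]}(-1)^{|\beta|}\tr(m(\beta)p_i)$; but when $A_i$ is empty we have $\beta_i = \hat 1$, so the sum $T(i)$ ranges over all of $\mathcal{B}_n$. Here the centrality hypothesis enters: since each biprojection $m(\beta)$ is central, $m(\beta)p_i$ is the central support of $p_i$ cut down, and $\tr(m(\beta)p_i)$ equals $\tr(p_i)$ when $p_i \le m(\beta)$ and $0$ otherwise. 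First I would argue that for $\langle p_i\rangle = \id$ the only $\beta \in \mathcal{B}_n$ with $p_i \le m(\beta)$ is $\beta = \emptyset$ (for which $m(\emptyset) = \id$), precisely because $p_i \le m(\beta) = \bigwedge_{j\in\beta}b_j$ would force $p_i$ below a coatom, contradicting $\langle p_i\rangle = \id$. Hence $T(i)$ reduces to the single term $\beta = \emptyset$, giving $S(i) = (-1)^0\tr(p_i) = \tr(p_i)$.

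It remains to confirm that the indices $i$ with $\langle p_i\rangle \ne \id$ contribute nothing, which is immediate: for such $i$ we have $|A_i| \ne 0$, so $S(i) = T(i)\cdot(1-1)^{|A_i|} = 0$. Assembling the two cases,
$$\varphi(\mathcal{P}) = \delta^2\sum_{i=1}^r S(i) = \delta^2\sum_{\langle p_i\rangle = \id}\tr(p_i),$$
which is the claimed formula.

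The part requiring the most care is the identification $\tr(m(\beta)p_i) = \tr(p_i)\cdot[p_i \le m(\beta)]$ under the centrality hypothesis, together with the reduction of $T(i)$ to its $\beta = \emptyset$ term. The hard point is really to make precise why centrality turns the coproduct/biprojection cut-down into an honest orthogonal projection multiplication compatible with the trace, so that $\tr(m(\beta)p_i)$ takes only the two values $0$ and $\tr(p_i)$; once this is granted, the collapse of the sum follows from the characterization of $\langle p_i\rangle = \id$ in terms of the coatoms $b_j$, exactly as in the second claim of Theorem~\ref{mainsubfactor}.
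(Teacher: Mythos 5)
Your proof is correct, but it takes a different route from the paper's. The paper argues globally: it rewrites $\varphi(\mathcal{P}) = \delta^2\sum_{b}\mu(b,\id)\tr(b)$, applies the Crosscut Theorem to get $\delta^2\sum_{\beta\in\mathcal{B}_n}(-1)^{|\beta|}\tr(m(\beta))$, and then uses centrality exactly once, to run the inclusion--exclusion principle on the commuting projections $m(\{1\}),\dots,m(\{n\})$: the alternating sum equals $\tr(\id)-\tr\bigl(R[\sum_{i=1}^n m(\{i\})]\bigr)$, and $\id-\bigvee_i m(\{i\})$ is precisely $\sum_{\langle p_i\rangle=\id}p_i$. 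You instead decompose the trace over the minimal central projections first, writing $\varphi(\mathcal{P})=\delta^2\sum_i S(i)$, and recycle the factorization $S(i)=T(i)\cdot(1-1)^{|A_i|}$ from the proof of Theorem \ref{mainsubfactor}; this kills the terms with $\langle p_i\rangle\neq\id$ \emph{without} any centrality hypothesis, and centrality enters only to evaluate the surviving $T(i)$ via the dichotomy $\tr(m(\beta)p_i)\in\{0,\tr(p_i)\}$, which correctly collapses $T(i)$ to its $\beta=\emptyset$ term (indeed $p_i\le m(\beta)$ with $\beta\neq\emptyset$ would put $p_i$ under a coatom, contradicting $A_i=\emptyset$). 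So both proofs rest on the same elementary fact about central projections, but yours localizes where centrality is needed and makes visible that the vanishing of the non-generating terms is automatic, while the paper's is shorter. One remark: the step you flag as the ``hard point'' is actually immediate and involves no coproduct: since $m(\beta)$ is central it commutes with $p_i$, so $m(\beta)p_i$ is a projection; it is central and $\le p_i$, hence by minimality of $p_i$ among central projections it equals $0$ or $p_i$, giving $\tr(m(\beta)p_i)=\tr(p_i)$ exactly when $p_i\le m(\beta)$ and $0$ otherwise.
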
  
\begin{proof}
First of all, $|b:e_1| = \tr(b)/ \tr(e_1)$ and $ \tr(e_1) = \delta^{-2}$, so $$\varphi(\mathcal{P}) = \delta^2\sum_{b \in [e_1,\id]} \mu(b,\id) \tr(b).$$
 By Theorem \ref{crosscut} with its map $m:  \mathcal{B}_n \to [e_1,\id]$, 
$$\varphi(\mathcal{P}) = \delta^2 \sum_{\beta \in \mathcal{B}_n} (-1)^{|\beta|} \tr(m(\beta)),$$ but every biprojection is central, so by the inclusion-exclusion principle
$$ \hspace*{1.8cm} \varphi(\mathcal{P}) = \delta^2 [\tr(\id) - \tr(R[\sum_{i=1}^n m(\{i\})])] = \delta^2\sum_{\langle p_i \rangle = \id} \tr(p_i). \hspace*{1.8cm} \qedhere $$
\end{proof}
\noindent  It follows that the converse of Theorem \ref{mainsubfactor} is true if every biprojection is central; if moreover they form a Boolean lattice, then by \cite[Theorem 4.26]{p1},  $\mathcal{P}$ is w-cyclic, so $\varphi(\mathcal{P})$ is nonzero; and without assuming the biprojections to be central, $\mathcal{P}$ is still w-cyclic by \cite[Theorem 5.9]{p2}, and $\varphi(\mathcal{P})$ is suspected to be still nonzero:
\begin{conjecture} \label{conj2} If $[e_1,\id]$ is Boolean then $\varphi(\mathcal{P})$ is nonzero. 
\end{conjecture}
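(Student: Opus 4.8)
The plan is to compute $\varphi(\mathcal{P})$ completely explicitly from the Boolean structure and then reduce the conjecture to an index-multiplicativity statement. Write $[e_1,\id]\cong\mathcal{B}_k$, let $q_1,\dots,q_k$ be its atoms and $b_1,\dots,b_k$ its coatoms, with $b_i=\bigvee_{j\neq i}q_j$. Since a finite Boolean lattice is self-dual, Remark \ref{atomistic} applies to coatoms as well as to atoms: every biprojection is \emph{uniquely} a meet of coatoms, equivalently a unique join $b_S:=\bigvee_{i\in S}q_i$ of atoms for some $S\subseteq\{1,\dots,k\}$. Hence the crosscut map $m\colon\mathcal{B}_k\to[e_1,\id]$ of Theorem \ref{crosscut} is an order-reversing bijection, with $m(I)=b_{\{1,\dots,k\}\setminus I}$. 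Theorem \ref{crosscut} then gives $\mu(b_S,\id)=(-1)^{k-|S|}$, so that
\[
\varphi(\mathcal{P})=\sum_{S\subseteq\{1,\dots,k\}}(-1)^{k-|S|}\,|b_S:e_1|.
\]

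First I would reduce everything to the identity $|b_S:e_1|=\prod_{i\in S}|q_i:e_1|$. Indeed, granting this multiplicativity and using $\sum_{S}(-1)^{k-|S|}\prod_{i\in S}x_i=\prod_{i=1}^k(x_i-1)$, one would obtain
\[
\varphi(\mathcal{P})=\prod_{i=1}^k\bigl(|q_i:e_1|-1\bigr),
\]
which is strictly positive because every atom $q_i>e_1$ has index $|q_i:e_1|>1$. This would prove the conjecture and, incidentally, recover the classical value $\varphi(C_{p_1\cdots p_k})=\prod_i(p_i-1)$ in the group case, where the relevant Boolean lattice is $\mathcal{L}(C_{p_1\cdots p_k})$.

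The core of the argument is thus to establish this exact multiplicativity from the Boolean hypothesis alone. The available tool is the supermodular index inequality for biprojections coming from \cite{li}, namely $|b\vee c:e_1|\cdot|b\wedge c:e_1|\ge|b:e_1|\cdot|c:e_1|$; applied to the complementary pair $q_j$, $b_S$ (for which $b_S\wedge q_j=e_1$ whenever $j\notin S$) it yields only the inequality $|b_{S\cup\{j\}}:e_1|\ge|b_S:e_1|\cdot|q_j:e_1|$. I would try to upgrade these inequalities to equalities by exploiting the rigidity of the \emph{full} Boolean lattice: in $\mathcal{B}_k$ every pair of elements is complemented, and I would attempt to produce, along a complementary pair, a genuine tensor/free decomposition of the planar algebra that forces the index to factor, and then induct on the rank using the rank-$(k-1)$ Boolean intervals $[e_1,b_1]$ and $[q_1,\id]$. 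In the central case this factorisation is exactly what underlies Proposition \ref{central} together with \cite[Theorem 4.26]{p1}.

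The main obstacle is precisely the absence of centrality. Without it, the inclusion--exclusion collapse that makes Proposition \ref{central} work is unavailable, and the equality case of the supermodular inequality for complementary biprojections is not known in general; a priori a Boolean lattice might be realised with $|b_S:e_1|$ strictly larger than $\prod_{i\in S}|q_i:e_1|$, which is also the source of the suspected counterexample to Watatani's problem. A fallback route is to bypass multiplicativity and prove directly that the top mixed difference $\sum_S(-1)^{k-|S|}|b_S:e_1|$ is positive, treating $S\mapsto|b_S:e_1|$ as a monotone supermodular function on $\mathcal{B}_k$ and establishing the requisite $k$-th order (total-positivity) estimates — but deriving $k$-th order positivity from mere pairwise supermodularity is itself the delicate point. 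In either approach one may exploit the known w-cyclicity in the Boolean case (\cite[Theorem 5.9]{p2}), which guarantees a minimal central projection $p_i$ with $\langle p_i\rangle=\id$ and hence, via the formula $\varphi(\mathcal{P})=\delta^2\sum_{\langle p_i\rangle=\id}\sum_S(-1)^{k-|S|}\tr(b_Sp_i)$ extracted from the proof of Theorem \ref{mainsubfactor}, a nonempty set of surviving terms whose overall sign must finally be controlled.
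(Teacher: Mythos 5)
First, note that this statement is not proved in the paper at all: it is Conjecture \ref{conj2}, explicitly left open (the surrounding text only establishes the weaker conclusion of w-cyclicity via \cite[Theorem 5.9]{p2}, and the nonvanishing of $\varphi$ only under the extra hypothesis that all biprojections are central, via Proposition \ref{central}). So your attempt has to stand on its own, and it does not close the problem. Your Boolean combinatorics at the start is correct: the crosscut map is an order-reversing bijection, $\mu(b_S,\id)=(-1)^{k-|S|}$, and hence $\varphi(\mathcal{P})=\sum_S(-1)^{k-|S|}|b_S:e_1|$. But the identity to which you reduce everything, $|b_S:e_1|=\prod_{i\in S}|q_i:e_1|$, is not merely unproven --- it is false, already for group subfactors. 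Take $G=PSL(2,7)$ and $H=D_8$ a Sylow $2$-subgroup. The only proper intermediate subgroups are the two copies of $S_4$ (one from each conjugacy class of maximal subgroups isomorphic to $S_4$) containing $H$, so $[H,G]$ is Boolean of rank $2$ with both atom indices equal to $|S_4:D_8|=3$; yet $|G:H|=21\neq 3\cdot 3$. Correspondingly, for $\mathcal{P}=\mathcal{P}(R\rtimes D_8\subseteq R\rtimes PSL(2,7))$ one gets $\varphi(\mathcal{P})=21-3-3+1=16$, not $(3-1)(3-1)=4$. Equality in the supermodular inequality amounts, in the group case, to a product factorization $G=M_1M_2$, and Booleanness of the interval simply does not force it; so the first route is dead, not just incomplete.

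Your fallback routes are also not proofs, as you yourself concede. In rank $2$ pairwise supermodularity alone does give positivity, since $\varphi\geq(|q_1:e_1|-1)(|q_2:e_1|-1)>0$, but in rank $\geq 3$ the inclusion--exclusion sum contains negatively signed middle terms $-|b_{ij}:e_1|$, for which a supermodular \emph{lower} bound points the wrong way; deriving the needed $k$-th order positivity from pairwise supermodularity is exactly what you call ``the delicate point,'' and no argument is offered. Similarly, the last suggestion --- combining w-cyclicity from \cite[Theorem 5.9]{p2} with the decomposition $\varphi(\mathcal{P})=\delta^2\sum_i S(i)$ from the proof of Theorem \ref{mainsubfactor} --- runs into precisely the obstruction that makes Theorem \ref{mainsubfactor} a one-way implication: w-cyclicity guarantees some $i$ with $|A_i|=0$, hence some potentially nonvanishing terms $S(i)=T(i)$, but without centrality one has no control on the signs of the traces entering $T(i)$ and no way to exclude cancellation across the surviving indices $i$; ruling that out is the whole content of the conjecture. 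In sum, you have a correct reformulation plus two strategies, one refuted by the $PSL(2,7)$ example above and the other left open.
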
 

\begin{remark} If $[e_1,\id]$ is Boolean of rank $n+1$, we wonder whether $\varphi(\mathcal{P}) \ge\phi^{n}$, with $\phi$ the golden ratio. If this lower bound is correct, then it is optimal as realized by the tensor product $\mathcal{TLJ}(\sqrt{2}) \otimes \mathcal{TLJ}(\phi)^{\otimes n}$, thanks to \cite[Theorem 4.8]{p1}. \end{remark}

By Theorem \ref{mainsubfactor}, a proof of Conjecture \ref{conj2} would be stronger than \cite{p2}.  This conjecture can be seen as the Boolean restriction of the following planar algebraic generalization of a problem of K.S. Brown (considered hard in \cite{sw}):
\begin{conjecture} \label{conj3} Let $\mathcal{P}$ be an irreducible subfactor planar algebra. Then its Euler characteristic (defined below) is nonzero.
$$\chi(\mathcal{P}) := -\sum_{b \in [e_1,\id]} \mu(b,\id)|\id:b|$$
Gasch\"utz proved the usual Brown's problem, i.e. for $\mathcal{P}(R \subseteq R \rtimes G)$, if $G$ is solvable.
\end{conjecture}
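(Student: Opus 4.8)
The plan is to treat $\chi(\mathcal{P})$ as one specialization of a single generating function and to import Gasch\"utz's argument for the solvable case of Brown's problem. Define the \emph{probabilistic zeta function} of $\mathcal{P}$ by $P(\mathcal{P},s) := \sum_{b \in [e_1,\id]} \mu(b,\id)\,|\id:b|^{-s}$. Since $|b:e_1| = \delta^2\,|\id:b|^{-1}$ by Definition \ref{index}, we have $\varphi(\mathcal{P}) = \delta^2 P(\mathcal{P},1)$ and $\chi(\mathcal{P}) = -P(\mathcal{P},-1)$, so that Theorem \ref{mainsubfactor} and Conjecture \ref{conj3} concern the two integer values $s=1$ and $s=-1$. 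In the group case, $P(\mathcal{P}(R \subseteq R \rtimes G),s)$ is exactly Hall's series $\sum_{H}\mu(H,G)[G:H]^{-s}$, whose value at $s=-1$ is, up to sign, the reduced Euler characteristic of the coset poset of $G$; Gasch\"utz's theorem is the statement that this series factors through a chief series when $G$ is solvable, and the goal is to reproduce such a factorization at the level of the biprojection lattice.

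First I would reduce to the top interval, exactly as in Proposition \ref{topinterval}. By Theorem \ref{crosscut}, $\mu(b,\id) = 0$ unless $b$ is a meet of coatoms, hence unless $b \in [t,\id]$; and $|\id:b| = \tr(b)^{-1}$ is intrinsic to $[t,\id]$. Therefore $\chi(\mathcal{P}) = \chi(\mathcal{P}(t,\id))$, and we may assume from the outset that the meet of the coatoms of $[e_1,\id]$ is $e_1$ itself.

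The core step is a multiplicative factorization of $P(\mathcal{P},s)$. Using the multiplicativity of the index \cite{jo2} together with the structure theory of biprojections from \cite{li}, I would look for a maximal chain $e_1 = b_0 < b_1 < \cdots < b_k = \id$ of \emph{normal} biprojections and argue that each section $[b_{j-1},b_j]$ contributes a local factor to $P(\mathcal{P},s)$, reducing the problem to the \emph{chief-factor} case of a lattice with a single minimal normal biprojection. In that case one computes the local factor explicitly, of the shape $1 - c_j\,|b_j:b_{j-1}|^{-s}$ with $c_j$ a nonnegative integer counting the relevant complementing biprojections, and checks that at $s=-1$ it equals $1 - c_j\,|b_j:b_{j-1}|$, which is nonzero because $|b_j:b_{j-1}| > 1$ forces this quantity to be either $1$ (when $c_j=0$) or strictly negative (when $c_j \ge 1$). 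Multiplying the factors then yields $\chi(\mathcal{P}) \neq 0$.

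The main obstacle is precisely the passage beyond the solvable case. The factorization above rests on the existence of a chief series of normal biprojections with well-understood sections; for a planar algebra whose lattice is the analog of a non-solvable group, where no such series exists and coatoms need not be normal, the alternating sum defining $P(\mathcal{P},-1)$ can exhibit cancellations that no current method controls. This is why Conjecture \ref{conj3} is genuinely hard: even its specialization to $\mathcal{P}(R \subseteq R \rtimes G)$ is the open general case of Brown's problem \cite{sw}, settled only for solvable $G$. Accordingly, the realistic target of this plan is the solvable analog, together with, as a first test, the Boolean case of Conjecture \ref{conj2}, where $P(\mathcal{P},s)$ should factor completely into rank-one terms of the form $1 - |b:b'|^{-s}$ and is thus manifestly nonzero at $s=-1$.
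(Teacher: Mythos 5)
You are addressing Conjecture \ref{conj3}, which the paper does not prove: it is stated explicitly as an open problem, a planar-algebraic generalization of K.S.~Brown's question, whose specialization to $\mathcal{P}(R \subseteq R\rtimes G)$ is itself open and considered hard in \cite{sw}, with only the solvable case settled by Gasch\"utz. So there is no proof in the paper to compare against, and your proposal --- as you concede in your last paragraph --- does not close the conjecture either. To give credit where due, your formal preliminaries are correct: setting $P(\mathcal{P},s) := \sum_{b \in [e_1,\id]} \mu(b,\id)\,|\id:b|^{-s}$, one indeed has $\varphi(\mathcal{P}) = \delta^2 P(\mathcal{P},1)$ and $\chi(\mathcal{P}) = -P(\mathcal{P},-1)$ (since $|b:e_1| = \delta^2\,|\id:b|^{-1}$ by Definition \ref{index}), and the reduction $\chi(\mathcal{P}) = \chi(\mathcal{P}(t,\id))$ follows from Theorem \ref{crosscut} exactly as in Proposition \ref{topinterval}. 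Your account of the group case (Hall's series, Brown's identification of $P(G,-1)$ with the reduced Euler characteristic of the coset poset, Gasch\"utz's factorization with factors $1 - c_j q_j^{-s}$ nonvanishing at $s=-1$) is also accurate.

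The gap is the entire core step, and it is not a technical detail. The factorization of $P(\mathcal{P},s)$ along a ``chief series of normal biprojections'' presupposes: (i) a notion of normal biprojection for which such series exist; (ii) an analog of Gasch\"utz's complement-counting formula for each section; and (iii) multiplicativity of $P$ across the series. None of these is available in the subfactor setting --- the paper's only structural result in this direction concerns central biprojections at depth $2$ --- and for a general irreducible subfactor planar algebra there may be no nontrivial normal/central biprojection at all (the analog of a simple group), in which case your program produces no factorization whatsoever. Even restricted to group subfactors, the plan reproves only the solvable case, i.e.\ it stops exactly where Brown's problem begins. Your proposed ``first test'' is also not a soft check: the complete factorization of $P(\mathcal{P},s)$ into rank-one terms in the Boolean case requires index multiplicativity across complements, $|\id : b\wedge b'| = |\id:b|\,|\id:b'|$, which is not established; and by Remark \ref{rever} the Boolean case of Conjecture \ref{conj3} is equivalent to Conjecture \ref{conj2} for the dual planar algebra, which the paper likewise leaves open. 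In short, what you have written is a reasonable and honestly flagged research program, not a proof; since any complete argument would in particular resolve Brown's problem, a genuinely new idea beyond importing Gasch\"utz is required.
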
 

\noindent It is an extension of Conjecture \ref{conj2} because by Remark \ref{rever}, $\chi(\mathcal{P})= \pm \varphi(\hat{\mathcal{P}})$ if $[e_1,\id]$ is Boolean (of rank $n$), because then $\mu(e_1,b) = (-1)^n \mu(b,\id)$. This last equality holds more generally for any Eulerian lattice, which is a graded lattice such that $\mu(a,b)=(-1)^{|b|-|a|}$ for $a \le b$, with $a \mapsto |a|$ the rank function. This leads us to:

\begin{conjecture} \label{conj4} If $[e_1,\id]$ is Eulerian then $\varphi(\mathcal{P})$ is nonzero, and so $\mathcal{P}$ is w-cyclic. 
\end{conjecture}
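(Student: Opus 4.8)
The plan is to use the Eulerian hypothesis to trade the Möbius weights for explicit signs, and then to reduce the non-vanishing of $\varphi(\mathcal{P})$ to a positivity statement about the index function $b\mapsto |b:e_1|$. Writing $n$ for the rank of $[e_1,\id]$ (so $|e_1|=0$ and $|\id|=n$), the Eulerian property gives $\mu(b,\id)=(-1)^{n-|b|}$, and since $|b:e_1|=\delta^{2}\tr(b)$ by Definition \ref{index}, the totient becomes
$$\varphi(\mathcal{P})=\sum_{b\in[e_1,\id]}(-1)^{n-|b|}\,|b:e_1|=(-1)^{n}\delta^{2}\!\!\sum_{b\in[e_1,\id]}(-1)^{|b|}\tr(b).$$
Thus the entire statement reduces to showing that this rank-alternating, trace-weighted sum does not vanish; the clause ``and so $\mathcal{P}$ is w-cyclic'' is then immediate from Theorem \ref{mainsubfactor}.

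First I would normalize. By Proposition \ref{topinterval} one has $\varphi(\mathcal{P})=|t:e_1|\cdot\varphi(t,\id)$ with $|t:e_1|\ge 1$, and since every interval of an Eulerian lattice is again Eulerian, I may pass to the top interval $[t,\id]$ and assume that the meet of the coatoms is the bottom element. In low rank this already settles matters: rank $1$ gives $\varphi=\delta^{2}-1>0$, and an Eulerian lattice of rank $2$ is forced to have exactly two atoms, hence to be $\mathcal{B}_2$; there a short computation yields $\varphi=1-x-y+xy$, where $x,y$ are the two atom indices, which the complementation identity $\tr(a_1\vee a_2)\tr(a_1\wedge a_2)=\tr(a_1)\tr(a_2)$ turns into $\varphi=(x-1)(y-1)>0$ (the atom indices exceeding $1$). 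This is the behavior I would try to propagate.

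Second, I would treat the Boolean sub-case (Conjecture \ref{conj2}) as the guiding model. If the index were multiplicative over complemented pairs, one would get $\varphi(\mathcal{P})=\prod_{a\ \mathrm{atom}}(|a:e_1|-1)$, a product of strictly positive factors, simultaneously giving non-vanishing and the conjectured golden-ratio lower bound. For a general Eulerian lattice this exact factorization fails, so I would instead attempt an induction on the rank, deleting a coatom (equivalently, on the dual side, an atom) and comparing the two totients through a Möbius/deletion--contraction recursion, keeping the alternating sum sign-definite by feeding in an index inequality. The natural candidate is a strict super/log-supermodularity of $b\mapsto|b:e_1|$, i.e. a controlled-defect version of $\tr(b_1\vee b_2)\tr(b_1\wedge b_2)\ge\tr(b_1)\tr(b_2)$, which one would hope to extract from the positivity of the coproduct \cite[Theorem 4.1]{li} together with the known biprojection estimates.

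The hard part will be exactly this positivity input. In an Eulerian lattice every interval contains as many elements of even rank as of odd rank, so $\sum_b(-1)^{|b|}\tr(b)$ has \emph{no} formal reason to be nonzero: its non-vanishing must come entirely from the subfactor-theoretic fact that the index grows strictly and supermodularly along the lattice, and converting the qualitative positivity of coproducts into the quantitative strict inequality needed to defeat all even/odd cancellations, uniformly over all Eulerian lattices, is the crux. This is the same obstruction that blocks the general Euler-characteristic statement: by Remark \ref{rever} one has $\chi(\mathcal{P})=\pm\varphi(\hat{\mathcal{P}})$ in the Eulerian case, so the present conjecture is precisely the Eulerian instance of Conjecture \ref{conj3}, for which, outside the solvable/Gasch\"utz regime, no such inequality is currently available.
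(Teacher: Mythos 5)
This statement is Conjecture \ref{conj4} of the paper: it is \emph{open}, and the paper contains no proof of it. The only parts that are actually established there are the ones you also handle correctly: the deduction ``nonzero $\Rightarrow$ w-cyclic'' via Theorem \ref{mainsubfactor}, the reduction to the top interval via Proposition \ref{topinterval}, and the identification (via Remark \ref{rever}) of the non-vanishing claim with the Eulerian instance of Conjecture \ref{conj3}, the planar-algebraic Brown problem, which the paper explicitly flags as hard. Your proposal is therefore not a proof but a reduction plan, and you concede this yourself: the ``positivity input'' --- a strict super-/log-supermodularity of $b \mapsto |b:e_1|$ strong enough to defeat the even/odd cancellation that Eulerian lattices are designed to produce --- is precisely the open content, and neither the paper nor the literature it cites supplies it. So the gap is not a detail; it sits exactly where the conjecture lives.

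Beyond the admitted gap, the one substantive step you present as solid is in fact false in general. The ``complementation identity'' $\tr(a_1\vee a_2)\tr(a_1\wedge a_2)=\tr(a_1)\tr(a_2)$, i.e. $\delta^2 = |a_1:e_1|\,|a_2:e_1|$ for the two atoms of a rank-$2$ Boolean biprojection lattice, is not a theorem about subfactors: multiplicativity of the index over a complemented pair is tied to a commuting/cocommuting-square condition on the quadrilateral $(e_1,a_1,a_2,\id)$, and there exist irreducible noncommuting quadrilaterals with no extra intermediate subfactors (Grossman--Jones, \emph{Intermediate subfactors with no extra structure}) whose biprojection lattice is exactly $\mathcal{B}_2$ but for which this identity fails. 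It holds in the group case only because a finite group with subgroup lattice $\mathcal{B}_2$ is forced to be $C_{pq}$ --- a classification fact, not a lattice-theoretic one. Consequently your rank-$2$ ``base case'' is itself essentially Conjecture \ref{conj2} at rank $2$: what one needs there is $\delta^2+1 \neq x+y$ for atom indices $x,y$, and assuming the product formula begs exactly the question the conjecture asks. In short: the statement is open, the paper proves only the final clause, and your attempt both inherits the open core and relies on an identity that known examples contradict.
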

\noindent The last part of Conjecture \ref{conj4}, deduced from the first together with Theorem \ref{mainsubfactor}, would provide an extension of Ore's theorem \cite[Theorem 5.9]{p2} from top Boolean to top Eulerian lattices (notation: a lattice is called top $X$ if its top interval is $X$). Note that the face lattice of any convex polytope is Eulerian \cite[Proposition 3.8.9]{sta}. 

\begin{proposition} \label{eub} An Eulerian subgroup lattice (of a finite group) is Boolean. 
\end{proposition}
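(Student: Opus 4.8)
The plan is to combine the Euler relation with induction on $|G|$ and the classification of minimal non-cyclic groups. First I would record the combinatorial content of the hypothesis. Since an Eulerian lattice is graded and every interval of an Eulerian poset is again Eulerian (the Möbius function of a subinterval is unchanged, and gradedness restricts), it suffices to understand length-two intervals. If $[a,b]$ has rank $2$ with middle elements $c_1, \dots, c_m$ (each covering $a$ and covered by $b$), then $\mu(a,b) = -(\mu(b,b) + \sum_i \mu(c_i,b)) = -(1 - m) = m-1$, while the Eulerian condition forces $\mu(a,b) = (-1)^2 = 1$. Hence every length-two interval of $[\hat{0},\hat{1}] = \mathcal{L}(G)$ has \emph{exactly two} atoms; I will call this the diamond property. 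I would also note at the outset that a cyclic group is harmless: $\mathcal{L}(C_n)$ is the divisor lattice of $n = \prod_i p_i^{a_i}$, which contains a chain of length $a_i$ as the interval below $C_{p_i^{a_i}}$, and a chain of length $\ge 2$ is not Eulerian; so an Eulerian $\mathcal{L}(C_n)$ forces $n$ squarefree, in which case the divisor lattice is exactly $\mathcal{B}_k$, i.e. Boolean.

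Next I would argue by induction on $|G|$, taking $G$ of minimal order with $\mathcal{L}(G)$ Eulerian but not Boolean. For any proper subgroup $M$, the lattice $\mathcal{L}(M)$ equals the interval $[\hat{0}, M]$ of $\mathcal{L}(G)$, hence is Eulerian; by the induction hypothesis $\mathcal{L}(M)$ is Boolean, so $M$ is cyclic. Moreover $G$ itself is not cyclic, for otherwise the previous paragraph would make $\mathcal{L}(G)$ Boolean. Thus $G$ is a \emph{minimal non-cyclic group}: non-cyclic, with every proper subgroup cyclic.

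Finally I would invoke the (elementary) classification of minimal non-cyclic finite groups: each such group is isomorphic to $C_p \times C_p$, to the quaternion group $Q_8$, or to a nonabelian group of order $pq$ with $p < q$ and $p \mid q-1$. In each case the diamond property fails. The lattice $\mathcal{L}(C_p \times C_p)$ is a rank-two lattice with $p+1 \ge 3$ atoms; $\mathcal{L}(Q_8)$ contains the rank-two interval $[Z(Q_8), Q_8]$ whose three middle elements are $\langle i \rangle, \langle j \rangle, \langle k \rangle$; and the nonabelian group of order $pq$ has a rank-two subgroup lattice with $q+1 \ge 3$ atoms (the normal $C_q$ together with the $q$ conjugate copies of $C_p$). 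In every case some length-two interval has $m \ne 2$ middle elements, giving $\mu = m-1 \ne 1$ and contradicting that $\mathcal{L}(G)$ is Eulerian. Hence no minimal counterexample exists, so every Eulerian subgroup lattice comes from a cyclic group of squarefree order and is therefore Boolean.

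The genuinely group-theoretic input is this last step, and I expect it to be the main obstacle: the lattice combinatorics alone cannot force Boolean-ness, since the face lattice of a polygon is Eulerian but not Boolean. Translating the purely combinatorial diamond property into a statement about $G$ requires ruling out every non-cyclic group, which is where the structure specific to subgroup lattices, rather than abstract Eulerian lattices, must enter. An alternative to citing the classification would be to first show that $|G|$ is squarefree (apply the diamond property to the interval below a subgroup of order $p^2$, which is a chain with one middle element if it is $C_{p^2}$ and has $p+1$ atoms if it is $C_p \times C_p$), and then observe that a squarefree-order group with a unique subgroup of each prime order has all Sylow subgroups normal, hence is nilpotent and so cyclic; the uniqueness of prime-order subgroups would again follow from producing a rank-two interval with too many atoms, which is the delicate point.
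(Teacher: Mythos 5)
Your overall strategy---reduce, via the rank-two ``diamond'' consequence of the Eulerian condition and a minimal-counterexample induction, to the classification of minimal non-cyclic groups---is genuinely different from the paper's proof, which is a direct application of the Kratzer--Th\'evenaz theorem: since $\mu(1,G)\in\frac{|G|}{|G:G'|_0}\mathbb{Z}$ and the Eulerian condition forces $\mu(1,G)=\pm 1$, one gets $|G|=|G:G'|_0$, hence $G'=1$ and $|G|$ square-free, so $G$ is cyclic and $\mathcal{L}(G)$ is Boolean. Your combinatorial preliminaries (every rank-two interval of an Eulerian lattice has exactly two middle elements; intervals of Eulerian lattices are Eulerian; the cyclic case) are correct. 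However, there is a genuine gap: the classification of minimal non-cyclic groups you invoke is misstated. The correct classification (Miller--Moreno) consists of $C_p\times C_p$, $Q_8$, and the groups $C_q\rtimes C_{p^m}$ of order $qp^m$ for \emph{any} $m\ge 1$, i.e. $\langle a,b \mid a^q=b^{p^m}=1,\ bab^{-1}=a^r\rangle$ with $p\mid q-1$ and $r$ of multiplicative order $p$ modulo $q$---not only the case $m=1$ of order $pq$. For instance $C_7\rtimes C_9$ of order $63$, with $C_9$ acting through its order-$3$ quotient, is minimal non-cyclic but is none of the three groups on your list, so your case analysis does not rule it out. The gap is repairable by your own method: in $G=C_q\rtimes C_{p^m}$ the kernel of the action is the central subgroup $Z=\langle b^p\rangle\cong C_{p^{m-1}}$, and by the correspondence theorem the interval $[Z,G]\cong\mathcal{L}(C_q\rtimes C_p)$ is a rank-two interval with $q+1\ge 4$ middle elements, violating the diamond property; but this family must be treated for the proof to be complete.

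A second, smaller issue: in the induction step you pass from ``$\mathcal{L}(M)$ is Boolean'' to ``$M$ is cyclic.'' That implication is not formal---it is a special case of Ore's theorem that a finite group with distributive subgroup lattice is cyclic---so you must either cite it or, better, restructure the induction so that the statement proved is ``$\mathcal{L}(G)$ Eulerian implies $G$ cyclic of square-free order''; then the proper subgroups of a minimal counterexample are cyclic directly by the induction hypothesis, and Booleanness follows at the end from your first paragraph. With these two repairs your argument works: it trades the paper's reliance on the Kratzer--Th\'evenaz theorem (a deep fact about M\"obius numbers of subgroup lattices) for the Miller--Moreno classification plus case checking, which is more elementary in its inputs but longer, and it correctly identifies that some genuinely group-theoretic input is unavoidable, since the face lattice of a polygon shows that Eulerian does not imply Boolean for abstract lattices.
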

\begin{proof}
If $\mathcal{L}(G)$ is Eulerian then $\mu(1,G)=\pm 1$, but \cite[Th\'eor\`eme 3.1]{kt} states that $$\mu(1,G) \in \frac{|G|}{|G:G'|_0} \mathbb{Z}$$ with $G'$ the commutator subgroup of $G$ and $|G:G'|_0$ the square-free part of $|G:G'|$. Then $|G|= |G:G'|_0$, and so $G'=1$. It follows that $G$ is abelian with $|G|$ square-free, so $G$ is cyclic of square-free order and $\mathcal{L}(G)$ is Boolean.
\end{proof}

\noindent It is unknown whether Proposition \ref{eub} extends to any interval. This leads us to:

\begin{question} Is there an irreducible subfactor planar algebra with a non-Boolean Eulerian biprojection lattice?
\end{question}
\noindent It is unknown whether any finite lattice admits a biprojection lattice representation \cites{wa,xu}, even in the group-subgroup case, so the smallest non-Boolean Eulerian lattice (the face lattice of the square polytope, below) could be a counter-example.
$$ \begin{tikzpicture} \small
\node (A0) at (0,0)     {$\bullet$};

\node (A1) at (-1.5,-1) {$\bullet$};
\node (A2) at (-0.5,-1) {$\bullet$};
\node (A3) at (0.5,-1)  {$\bullet$};
\node (A4) at (1.5,-1)  {$\bullet$};

\node (A5) at (-1.5,-2) {$\bullet$};
\node (A6) at (-0.5,-2) {$\bullet$};
\node (A7) at (0.5,-2)  {$\bullet$};
\node (A8) at (1.5,-2)  {$\bullet$};

\node (A9) at (0,-3)    {$\bullet$};
 \tikzstyle{segm}=[-,>=latex, semithick]
\draw [segm] (A0)to(A1); \draw [segm] (A0)to(A2); \draw [segm] (A0)to(A3); \draw [segm] (A0)to(A4);

\draw [segm] (A1)to(A5); \draw [segm] (A1)to(A6);
\draw [segm] (A2)to(A5); \draw [segm] (A2)to(A7);
\draw [segm] (A3)to(A6); \draw [segm] (A3)to(A8);
\draw [segm] (A4)to(A7); \draw [segm] (A4)to(A8);

\draw [segm] (A5)to(A9); \draw [segm] (A6)to(A9); \draw [segm] (A7)to(A9); \draw [segm] (A8)to(A9);

\end{tikzpicture} $$
\section{Applications}

We will deduce a non-trivial upper bound from Theorem \ref{mainsubfactor} providing a link between combinatorics and representations in finite group theory by translating.

\begin{definition} The \emph{Euler totient} of an interval of finite groups $[H,G]$ is $$ \varphi(H,G) := \sum_{K \in [H,G]}\mu(K,G) |K:H|. $$
\end{definition} 
\noindent Note that $\varphi(\mathcal{P}(R \rtimes H \subseteq R \rtimes G)) = \varphi(H,G)$.
\begin{corollary}  \label{Eulerthm} There is $g \in G$ with $\langle Hg \rangle = G$ if and only if $ \varphi(H,G)$ is nonzero.
\end{corollary}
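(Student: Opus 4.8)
The plan is to deduce Corollary \ref{Eulerthm} from Theorem \ref{mainsubfactor} (equivalently Theorem \ref{mainintro}) applied to the appropriate planar algebra, combined with the group-theoretic dictionary already established in the excerpt. The key observation is the remark preceding the statement: $\varphi(\mathcal{P}(R \rtimes H \subseteq R \rtimes G)) = \varphi(H,G)$, so the interval Euler totient of groups \emph{is} the planar algebraic Euler totient of the subfactor $(R \rtimes H \subseteq R \rtimes G)$. Thus one direction should follow immediately: if $\varphi(H,G) = \varphi(\mathcal{P})$ is nonzero, then by Theorem \ref{mainsubfactor} the planar algebra $\mathcal{P}$ is w-cyclic, and I must then translate w-cyclicity of $\mathcal{P}(R \rtimes H \subseteq R \rtimes G)$ into the group-theoretic statement that there exists $g \in G$ with $\langle Hg \rangle = G$ (where $\langle Hg \rangle$ denotes the subgroup generated by the coset $Hg$, i.e.\ the smallest subgroup containing $Hg$).

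First I would recall the Galois correspondence \cite{nk}: the biprojection lattice $[e_1,\id]$ of $\mathcal{P}(R \rtimes H \subseteq R \rtimes G)$ is isomorphic to the interval $[H,G]$ in the subgroup lattice, with intermediate subfactors $R \rtimes H \subseteq R \rtimes K \subseteq R \rtimes G$ corresponding to subgroups $H \subseteq K \subseteq G$, and the index $|b_K : e_1| = |K:H|$ matching Definition \ref{index}. This identifies the two Euler totients term by term and gives the equality $\varphi(\mathcal{P}) = \varphi(H,G)$, establishing that $\varphi(H,G)$ nonzero is equivalent to $\varphi(\mathcal{P})$ nonzero.

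The substantive step is identifying w-cyclicity of this particular planar algebra with the existence of $g$ such that $\langle Hg\rangle = G$; this is where the main obstacle lies. I expect that the minimal projections in $\mathcal{P}_{2,+}$ for the subfactor $(R \rtimes H \subseteq R \rtimes G)$ are indexed by (double cosets or) the cosets $Hg$, and that the biprojection $\langle u \rangle$ generated by the minimal projection $u$ corresponding to $Hg$ equals the biprojection attached to the subgroup $\langle H, g\rangle = \langle Hg\rangle$. Granting this dictionary, w-cyclicity ($\exists$ minimal $u$ with $\langle u \rangle = \id$) translates exactly into $\exists g$ with $\langle Hg\rangle = G$, because $\id$ corresponds to the top subgroup $G$. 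For the converse direction I would run the argument in reverse: if such a $g$ exists then $\mathcal{P}$ is w-cyclic, and I must show $\varphi(H,G)$ is then nonzero --- but here Theorem \ref{mainsubfactor} only gives one implication, so the converse requires the computation of Proposition \ref{central}-type, or a direct Hall-style count as in Lemma \ref{card}.

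Concretely, I would prove the converse by adapting the proof of Lemma \ref{card} via the Crosscut Theorem \ref{crosscut}: writing $\varphi(H,G) = \sum_{\beta \in \mathcal{B}_n}(-1)^{|\beta|}|m(\beta):H|$ where $m$ runs over the coatoms of $[H,G]$, the inclusion-exclusion principle should identify $\varphi(H,G)$ with a count of cosets $Hg$ not contained in any proper intermediate subgroup, i.e.\ those with $\langle Hg\rangle = G$. Then $\varphi(H,G)$ is nonzero precisely when such a coset exists, which is the desired equivalence including the converse. The main obstacle I anticipate is making the coset-level inclusion-exclusion rigorous: one must verify that the relevant ``measure'' $|K:H|$ counts cosets $Hg$ with $\langle Hg\rangle \subseteq K$, so that the alternating sum over coatoms yields exactly the count of generating cosets --- the analogue of Hall's original argument but at the level of cosets of $H$ rather than elements of $G$.
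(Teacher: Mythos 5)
Your direct count --- the Crosscut Theorem plus inclusion--exclusion at the level of cosets of $H$, using that $|K:H|$ is the number of cosets $Hg$ contained in $K$ --- is exactly the paper's own proof, and by itself it settles both directions, since it shows $\varphi(H,G)$ equals the number of cosets $Hg$ with $\langle Hg \rangle = G$, hence is nonzero if and only if such a coset exists. The planar-algebraic detour through Theorem \ref{mainsubfactor} is redundant and incomplete as stated (the dictionary identifying minimal projections with double cosets and $\langle u \rangle$ with the biprojection of $\langle Hg \rangle$ is asserted, not proved); the paper's one-line alternative is instead Proposition \ref{central}, whose positive-term formula gives the full equivalence at once.
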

\begin{proof} By Proposition \ref{central}; or directly, let 
$M_1, \dots, M_n $ be the coatoms of $[H,G]$. Then, as for the proof of Lemma \ref{card}, $$|H| \varphi(H,G) = |G \setminus \bigcup M_i|,$$ so that  $\varphi(H,G)$ is the cardinal of $\{ Hg \ | \ g \in G \text{ and } \langle Hg \rangle = G \}$.
\end{proof}

\begin{corollary} \label{gene}
The minimal cardinal for a generating set of a finite group $G$, is the minimal length $\ell$ for an ordered chain of subgroups $$\{e\}=H_0 < H_1 < \dots < H_{\ell} = G$$ such that $\varphi(H_i, H_{i+1})$ is nonzero. 
\end{corollary}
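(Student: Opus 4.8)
The plan is to prove Corollary \ref{gene} by showing that the minimal length of such a chain equals the minimal size $d(G)$ of a generating set, via two inequalities. The key conceptual tool is Corollary \ref{Eulerthm}: $\varphi(H_i, H_{i+1}) \neq 0$ if and only if there is $g \in H_{i+1}$ with $\langle H_i, g \rangle = H_{i+1}$, i.e. the step from $H_i$ to $H_{i+1}$ is achieved by adjoining a single element. Thus a chain of length $\ell$ with all $\varphi(H_i, H_{i+1}) \neq 0$ is exactly a chain in which each successive subgroup is generated by the previous one together with one extra element.

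First I would prove that $\ell \geq d(G)$, or rather that the minimal $\ell$ is at least $d(G)$, by the following argument: given a valid chain $\{e\} = H_0 < H_1 < \dots < H_\ell = G$, Corollary \ref{Eulerthm} furnishes elements $g_1, \dots, g_\ell$ with $g_i \in H_i$ and $\langle H_{i-1}, g_i \rangle = H_i$. An easy induction on $i$ then shows $\langle g_1, \dots, g_i \rangle = H_i$: indeed $H_i = \langle H_{i-1}, g_i \rangle = \langle \langle g_1, \dots, g_{i-1}\rangle, g_i \rangle = \langle g_1, \dots, g_i \rangle$. Taking $i = \ell$ yields $G = \langle g_1, \dots, g_\ell\rangle$, so $\ell$ elements generate $G$, whence $d(G) \leq \ell$ and in particular $d(G) \leq \min \ell$.

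Next I would prove the reverse inequality $\min \ell \leq d(G)$ by constructing an explicit chain of length $d(G)$. Let $\{g_1, \dots, g_d\}$ be a minimal generating set, $d = d(G)$, and set $H_i := \langle g_1, \dots, g_i \rangle$, with $H_0 = \{e\}$ and $H_d = G$. Each inclusion $H_{i-1} \leq H_i$ is strict: if $H_{i-1} = H_i$ then $g_i \in H_{i-1} = \langle g_1,\dots,g_{i-1}\rangle$, so $\{g_1,\dots,g_{i-1}\} \cup \{g_{i+1},\dots,g_d\}$ would generate $G$ with only $d-1$ elements, contradicting minimality. By construction $\langle H_{i-1}, g_i \rangle = H_i$, so Corollary \ref{Eulerthm} gives $\varphi(H_{i-1}, H_i) \neq 0$ for each $i$. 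This is a valid chain of length $d$, so $\min \ell \leq d(G)$. Combining the two inequalities gives $\min \ell = d(G)$, which is the claim.

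The main obstacle, such as it is, lies in the first inequality: one must be careful that the single-generator step supplied by Corollary \ref{Eulerthm} can be bookkept into a global generating set of the prescribed size, which the induction handles cleanly because each $g_i$ is absorbed additively. The construction in the second part is routine once one recognizes that a minimal generating set produces a strictly increasing chain with exactly the single-element-extension property. No subfactor machinery is needed here beyond Corollary \ref{Eulerthm}, which already encapsulates the totient computation; the remainder is elementary group theory on generating sets.
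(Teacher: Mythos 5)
Your proof is correct and takes essentially the same route as the paper's: both directions reduce to Corollary \ref{Eulerthm} via the correspondence between a generating set $\{g_1,\dots,g_n\}$ and the chain $H_i = \langle g_1,\dots,g_i\rangle$ (using $\langle H g\rangle = \langle H, g\rangle$). Your write-up merely spells out the two inequalities and the strictness-via-minimality point that the paper's terse ``iff'' chain leaves implicit.
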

\begin{proof} A subset $\{g_1, \dots , g_n \}$ is generating iff there is an ordered chain of subgroups $$\{e\}=K_0 < K_1 < \dots < K_n = G$$ with $K_{i+1}=\langle K_i, g_{i+1} \rangle = \langle K_i g_{i+1} \rangle$, iff $\varphi(K_i, K_{i+1})$ is nonzero by Corollary \ref{Eulerthm}. \end{proof}

\noindent So, the minimal cardinal for a generating set of a finite group $G$ depends only on the subgroup lattice and indices (known to \cite{hal}).  We will generalize Corollary \ref{gene} by a non-trivial upper bound. Let $\mathcal{P}$ be an irreducible subfactor planar algebra.

\begin{lemma}[\cite{p1}] \label{lem}  Let $b_1 < b_2$ be biprojections. If $\mathcal{P}(b_1,b_2)$ is w-cyclic, then there is a
minimal projection $u \in \mathcal{P}_{2,+}$ such that $b_2 = \langle b_1, u \rangle$.
\end{lemma}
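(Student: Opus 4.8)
The plan is to reduce Lemma~\ref{lem} to the w-cyclicity hypothesis on the relative planar algebra $\mathcal{P}(b_1,b_2)$ and then lift the resulting minimal projection back to the ambient planar algebra $\mathcal{P}$. By definition of w-cyclicity, there exists a minimal projection $v \in \mathcal{P}(b_1,b_2)_{2,+}$ generating the identity biprojection of $\mathcal{P}(b_1,b_2)$. The key point is that the planar algebra $\mathcal{P}(b_1,b_2)$ is built from the intermediate subfactor $(K_1 \subseteq K_2)$ corresponding to $b_1 \le b_2$ (Definition~\ref{index}), and its $2$-box space, identity biprojection, and coproduct structure sit inside those of $\mathcal{P}$ in a controlled way. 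First I would make precise the identification: the Jones projection of $K_1$ inside $K_2$ corresponds to $b_1$ viewed as the bottom of the interval $[b_1,b_2]$, and the identity of $\mathcal{P}(b_1,b_2)$ corresponds to $b_2$. Under this identification, a minimal projection of $\mathcal{P}(b_1,b_2)_{2,+}$ should correspond to a minimal projection $u$ of $\mathcal{P}_{2,+}$ lying under $b_2$ and above (or interacting correctly with) $b_1$.

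Next I would translate the generation statement. In $\mathcal{P}(b_1,b_2)$, the biprojection generated by $v$ (via the iterated coproduct of Definition~\ref{gener}) is the full identity, which means $v$ generates everything up to $b_2$ relative to $b_1$. Pulling this back, the biprojection $\langle b_1, u \rangle$ generated in $\mathcal{P}$ by the sum $b_1 + u$ should equal $b_2$: the coproduct computations inside $\mathcal{P}(b_1,b_2)$ are exactly the coproduct computations in $\mathcal{P}$ restricted to the interval, so the range projections $p_n$ stabilize at $b_2$ rather than at the identity of $\mathcal{P}$. Here I would invoke the multiplicativity and compatibility of the Fourier transform and coproduct under the intermediate-subfactor construction, so that the generated-biprojection operation $\langle \cdot \rangle$ commutes with passing to $\mathcal{P}(b_1,b_2)$.

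The main obstacle I expect is precisely the compatibility of the coproduct and the generation operator between $\mathcal{P}$ and the relative planar algebra $\mathcal{P}(b_1,b_2)$: one must verify that the smallest biprojection of $\mathcal{P}$ containing both $b_1$ and $u$ genuinely coincides with $b_2$, rather than merely being contained in the interval $[b_1, b_2]$ or overshooting. This requires checking that the minimality of $u$ in $\mathcal{P}$ follows from (and is equivalent to) the minimality of $v$ in $\mathcal{P}(b_1,b_2)$, which is where the structural relationship between the two planar algebras does the real work. Once this dictionary between minimal projections and between the two $\langle \cdot \rangle$ operations is established, the conclusion $b_2 = \langle b_1, u \rangle$ is immediate from unwinding the definition of w-cyclicity for $\mathcal{P}(b_1,b_2)$. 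Since this lemma is attributed to \cite{p1}, I would expect the cited reference to supply exactly this compatibility, so the proof here can be a short citation combined with the identification of the relative identity biprojection with $b_2$.
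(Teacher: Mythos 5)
The paper never proves this lemma itself --- it is imported verbatim from \cite{p1} --- so the comparison is with the argument that reference actually supplies. Measured against it, your proposal has a genuine gap, and it sits exactly at the step you flag as ``the main obstacle'' and then assume away: the claimed dictionary under which a minimal projection of $\mathcal{P}(b_1,b_2)_{2,+}$ corresponds to a minimal projection of $\mathcal{P}_{2,+}$ is false. The $2$-box space of $\mathcal{P}(b_1,b_2)$ is identified (via Liu's results on biprojections) with the $b_1$-bi-invariant corner of $b_2\mathcal{P}_{2,+}b_2$, and minimality there only means minimality among bi-invariant projections. The group case already refutes the transfer of minimality: for $\mathcal{P}(R \subseteq R\rtimes G)$ one has $\mathcal{P}_{2,+}\cong \ell^\infty(G)$, and taking $b_1=b_H$ (the biprojection of a subgroup $H$) and $b_2=\id$, the relative planar algebra $\mathcal{P}(b_1,\id)=\mathcal{P}(R\rtimes H\subseteq R\rtimes G)$ has abelian $2$-box space spanned by the indicators $1_{HgH}$ of double cosets; its minimal projections have rank $|HgH|>1$ inside $\ell^\infty(G)$, hence are not minimal in $\mathcal{P}_{2,+}$ as soon as $H\neq\{e\}$. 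So no identification can carry minimal projections to minimal projections, and a proof resting on that cannot be completed.

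The missing idea, which is the heart of the proof in \cite{p1}, is a coproduct ``smearing'' argument that bridges precisely this discrepancy. Starting from a minimal projection $v$ of $\mathcal{P}(b_1,b_2)_{2,+}$ with $\langle v\rangle = b_2$, one chooses $u \le v$ a minimal projection of $\mathcal{P}_{2,+}$, in general strictly smaller than $v$. One then shows $R(b_1 * u * b_1) = v$: the element $b_1 * u * b_1$ is positive by \cite[Theorem 4.1]{li}, its range projection is a nonzero projection of the relative algebra satisfying $R(b_1 * u * b_1) \le R(b_1 * v * b_1) = v$, and minimality of $v$ \emph{in the relative algebra} forces equality. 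Since $\langle b_1,u\rangle = \langle b_1 + u\rangle \succeq (b_1+u)^{*3} \succeq b_1 * u * b_1$, this gives $\langle b_1,u\rangle \ge \langle v\rangle = b_2$, while $\langle b_1,u\rangle \le b_2$ is clear from $b_1, u \le b_2$. In the group picture this is just the identity $1_H * \delta_g * 1_H = |H|\cdot 1_{HgH}$: one picks a single point $g$ of the generating double coset and recovers the whole coset by convolving with $H$ on both sides. Your proposal never produces this mechanism; it postulates instead a compatibility (minimality passing between the two algebras) that is exactly what fails, so as written it is not a proof.
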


\begin{theorem} \label{upper}
The minimal number $r$ of minimal projections generating the identity biprojection (i.e. $\langle u_1, \dots, u_r \rangle = \id$) is at most the minimal length $\ell$ for an ordered chain of biprojections $$e_1=b_0 < b_1 < \dots < b_{\ell} = \id$$ such that $\varphi(b_i,b_{i+1})$ is nonzero.
\end{theorem}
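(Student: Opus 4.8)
The plan is to turn a shortest qualifying chain of biprojections into a generating family of minimal projections of the same size, which immediately yields $r \le \ell$. First I would fix an ordered chain $e_1 = b_0 < b_1 < \dots < b_\ell = \id$ with every $\varphi(b_i,b_{i+1})$ nonzero and with $\ell$ minimal. Such a chain exists (so that $\ell$ is well defined): any maximal chain works, since for a covering $b_i \lessdot b_{i+1}$ the interval $[b_i,b_{i+1}]$ has only two elements, whence $\varphi(b_i,b_{i+1}) = |b_{i+1}:b_i| - 1 > 0$ as $b_i < b_{i+1}$.

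The conceptual heart is the identification of $\varphi(b_i,b_{i+1})$ with the Euler totient of the intermediate planar algebra $\mathcal{P}(b_i,b_{i+1}) = \mathcal{P}(K_i \subseteq K_{i+1})$ of Definition \ref{index}: its biprojection lattice is the interval $[b_i,b_{i+1}]$, with Jones projection $b_i$ and identity $b_{i+1}$, and both the relative index $|b:b_i|$ and the M\"obius function of $[b_i,b_{i+1}]$ agree with their restrictions from $\mathcal{P}$, so that $\varphi(\mathcal{P}(b_i,b_{i+1})) = \varphi(b_i,b_{i+1})$. Granting this, the hypothesis $\varphi(b_i,b_{i+1}) \neq 0$ lets me apply Theorem \ref{mainsubfactor} to each intermediate planar algebra and conclude that every $\mathcal{P}(b_i,b_{i+1})$ is w-cyclic. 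Then Lemma \ref{lem} supplies, for each $i \in \{0,\dots,\ell-1\}$, a minimal projection $u_{i+1} \in \mathcal{P}_{2,+}$ with $b_{i+1} = \langle b_i, u_{i+1} \rangle$.

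It then remains to check that these $\ell$ minimal projections generate $\id$, which I would establish by proving $b_i = \langle u_1, \dots, u_i \rangle$ by induction on $i$. The base case uses that $e_1$ is the bottom of the biprojection lattice, so $b_1 = \langle e_1, u_1 \rangle = \langle u_1 \rangle$. For the inductive step I would use the characterization coming from Definition \ref{gener}, together with the fact that the range projection of a sum of positive elements is the join of their range projections: $\langle S \rangle$ is the smallest biprojection dominating every element of $S$. Hence a biprojection dominates $b_i = \langle u_1,\dots,u_i\rangle$ if and only if it dominates each of $u_1,\dots,u_i$, so $\langle b_i, u_{i+1}\rangle = \langle u_1,\dots,u_{i+1}\rangle$. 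Taking $i = \ell$ gives $\langle u_1,\dots,u_\ell\rangle = b_\ell = \id$, exhibiting a generating family of $\ell$ minimal projections and forcing $r \le \ell$.

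I expect the main obstacle to be precisely the identification in the second paragraph: confirming that $\varphi(b_i,b_{i+1})$ really is the Euler totient of the intermediate subfactor planar algebra $\mathcal{P}(b_i,b_{i+1})$, so that Theorem \ref{mainsubfactor} is applicable. This rests on the compatibility of the biprojection lattice $[b_i,b_{i+1}]$, the trace-normalized index, and the M\"obius function of $\mathcal{P}(b_i,b_{i+1})$ with the ambient data of $\mathcal{P}$; once these are secured, the induction in the third paragraph is routine.
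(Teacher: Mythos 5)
Your proposal is correct and takes essentially the same route as the paper's proof: apply Theorem \ref{mainsubfactor} to each intermediate planar algebra $\mathcal{P}(b_i,b_{i+1})$, then Lemma \ref{lem} to obtain minimal projections $u_{i+1}$ with $b_{i+1} = \langle b_i, u_{i+1} \rangle$, and conclude $\langle u_1, \dots, u_{\ell} \rangle = \id$. The only difference is that you make explicit two steps the paper leaves implicit, namely the identification $\varphi(b_i,b_{i+1}) = \varphi(\mathcal{P}(b_i,b_{i+1}))$ justifying the appeal to Theorem \ref{mainsubfactor}, and the final induction showing $\langle b_i, u_{i+1}\rangle = \langle u_1,\dots,u_{i+1}\rangle$.
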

\begin{proof} Consider a chain as above. By Theorem \ref{mainsubfactor}, $\mathcal{P}(b_i,b_{i+1})$ is w-cyclic, so by Lemma \ref{lem}, there is a minimal projection $u_{i+1} \in \mathcal{P}_{2,+}$ such that $b_{i+1} = \langle b_i, u_{i+1} \rangle$. It follows that $\langle u_1, \dots, u_{\ell} \rangle = \id$.
\end{proof}

We deduce weak dual versions of Corollaries \ref{Eulerthm}, \ref{Eulercyclic} and \ref{gene}, giving the link between combinatorics and representation theory:

\begin{definition} The \emph{dual Euler totient} of the interval $[H,G]$ is $$\hat{\varphi}(H,G) := \sum_{K \in [H,G]}\mu(H,K) |G:K|.$$
\end{definition}  
\noindent  Note that $ \varphi(\mathcal{P}(R^G  \subseteq R^H)) = \hat{\varphi}(H,G)$.

\begin{definition} \label{fixstab} Let $ W $ be a representation of a group $ G $, $ K $ a subgroup of $ G $, and $ X $ a subspace of $ W $. Let the \emph{fixed-point subspace} be $$ W^{K}:=\{w \in W \ \vert \ kw=w \ , \forall k \in K \} $$ and the \emph{pointwise stabilizer subgroup} be $$ G_{(X)}:=\{ g \in G \ \vert \ gx=x \ , \forall x \in X \} $$ \end{definition} 

\begin{definition} \label{linprim}
An interval of finite group $ [H,G] $ is said to be \emph{linearly primitive} if there is an irreducible complex representation $ V $ of $ G $ with $ G_{(V^H)} = H $.
\end{definition}

\begin{theorem}[\cite{p1}] \label{wgrp}
The planar algebra $ \mathcal{P}(R^G \subseteq R^H) $ is w-cyclic if and only if the interval of finite groups $ [H,G] $ is linearly primitive. \end{theorem}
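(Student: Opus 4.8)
The plan is to translate both sides into classical representation theory by making the $2$-box space of $\mathcal{P}(R^G\subseteq R^H)$ fully explicit. First I would identify $\mathcal{P}_{2,+}=N'\cap M_1$ (with $N=R^G$, $M=R^H$) with the Hecke algebra $\operatorname{End}_{\mathbb{C}G}(\mathbb{C}[G/H])$, where $\mathbb{C}[G/H]=\operatorname{Ind}_H^G\mathbf{1}$; under this identification the minimal central projections are exactly the projections $p_V$ onto the $V$-isotypic component of $\mathbb{C}[G/H]$, indexed by those $V\in\operatorname{Irr}(G)$ with $V^H\neq 0$ (the multiplicity of $V$ being $\dim V^H$). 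On the lattice side, the Galois correspondence together with Theorem \ref{bisch} gives an order-reversing bijection $L\mapsto b_L$ from $[H,G]$ onto $[e_1,\id]$, with $b_H=\id$ and $b_G=e_1$; and I would check that $b_L=e^{R^H}_{R^L}$ is the orthogonal projection of $\mathbb{C}[G/H]$ onto the subrepresentation $\operatorname{Ind}_L^G\mathbf{1}=\mathbb{C}[G/L]$ (the $L$-invariant coset functions), so that its range has $V$-multiplicity $\dim V^L$.

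With this dictionary in place, the core step is to compute the biprojection generated by each $p_V$. By Definition \ref{gener}, $\langle p_V\rangle$ is the smallest biprojection $b\succeq p_V$, and every biprojection is some $b_L$; since $p_V$ and $b_L$ are projections, $p_V\preceq b_L$ simply means $\operatorname{range}(p_V)\subseteq\operatorname{range}(b_L)$. As $\operatorname{range}(p_V)$ is the \emph{entire} $V$-isotypic part of $\mathbb{C}[G/H]$, this containment holds iff the $V$-multiplicities agree, $\dim V^H=\dim V^L$; and because $H\le L$ forces $V^L\subseteq V^H$, this is equivalent to $V^H=V^L$, i.e. to $L$ fixing $V^H$ pointwise, i.e. to $L\le G_{(V^H)}$ (notation of Definition \ref{fixstab}). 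Since $L\mapsto b_L$ reverses order, the smallest biprojection above $p_V$ corresponds to the largest admissible $L$, whence $\langle p_V\rangle=b_{G_{(V^H)}}$.

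The conclusion is then immediate. By definition of w-cyclic, $\mathcal{P}$ is w-cyclic iff some minimal central projection generates $\id$, i.e. iff there is $V\in\operatorname{Irr}(G)$ with $b_{G_{(V^H)}}=\id=b_H$; as $L\mapsto b_L$ is a bijection this says $G_{(V^H)}=H$, which is precisely linear primitivity of $[H,G]$ (Definition \ref{linprim}). The degenerate case is harmless: if $V^H=0$ then $G_{(V^H)}=G$, so such a $V$ cannot give $\id$ unless $H=G$, in which case both conditions hold trivially via the trivial representation.

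The main obstacle is the first paragraph, namely setting up the dictionary rigorously — in particular verifying that the intermediate biprojection $b_L=e^{R^H}_{R^L}$ is exactly the coset-averaging projection onto $\operatorname{Ind}_L^G\mathbf{1}$. This is where the explicit planar-algebra structure of fixed-point subfactors (the $2$-box space as a Hecke algebra, and the Jones projections of intermediate subfactors realized as $G$-equivariant averaging operators) must be invoked. I expect that once these identifications are granted, no analysis of the coproduct or of the generation dynamics of Definition \ref{gener} is needed: the entire content of the generation statement reduces to the elementary identity $V^H=V^L\iff L\le G_{(V^H)}$.
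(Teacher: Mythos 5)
Your proposal is correct and follows essentially the same route as the source of this statement: the paper itself gives no proof (the theorem is imported from \cite{p1}), and the argument there likewise identifies $\mathcal{P}_{2,+}(R^G \subseteq R^H)$ with $\bigoplus_{V^H \neq 0} \operatorname{End}(V^H) \cong \operatorname{End}_{\mathbb{C}G}(\mathbb{C}[G/H])$, matches the biprojections with the projections onto the subspaces $\mathbb{C}[G/L]$ for $L \in [H,G]$ under the order-reversing Galois correspondence, and computes $\langle p_V \rangle = b_{G_{(V^H)}}$ from the elementary equivalence $V^H = V^L \iff L \le G_{(V^H)}$. Your observation that no coproduct dynamics is needed—because Definition \ref{gener} already characterizes $\langle a \rangle$ as the smallest biprojection $b \succeq a$—is exactly the mechanism that makes the cited proof work.
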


\begin{corollary} \label{dualEulerthm}  If the dual Euler totient $\hat{\varphi}(H,G)$ is nonzero, then the interval of finite groups $ [H,G] $ is linearly primitive.
\end{corollary}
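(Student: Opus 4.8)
The plan is to reduce Corollary \ref{dualEulerthm} to Theorem \ref{mainsubfactor} by applying the latter to the right planar algebra, and then to invoke Theorem \ref{wgrp} to translate w-cyclicity into linear primitivity. The key observation is the identity recorded just above the statement, namely that $\varphi(\mathcal{P}(R^G \subseteq R^H)) = \hat{\varphi}(H,G)$. So first I would set $\mathcal{P} := \mathcal{P}(R^G \subseteq R^H)$, which is an irreducible subfactor planar algebra (the fixed-point subfactor $R^G \subseteq R^H$ has finite index and is irreducible since $[H,G]$ has a least element), so that Theorem \ref{mainsubfactor} applies to it.

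\smallskip

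The main steps, in order, are as follows. First, verify that $\varphi(\mathcal{P}) = \hat{\varphi}(H,G)$; this is exactly the noted equality, which in turn follows from Remark \ref{rever} together with the fact that the biprojection lattice of $\mathcal{P}(R^G \subseteq R^H)$ is the dual (reversed) lattice of $\mathcal{P}(R^H \subseteq R^G)$, whose biprojection lattice is $\mathcal{L}([H,G])$ by the Galois correspondence, with index $|G:K|$ playing the role of $|K:H|$ under the reversal. Second, assuming $\hat{\varphi}(H,G) \neq 0$, conclude $\varphi(\mathcal{P}) \neq 0$, hence by Theorem \ref{mainsubfactor} that $\mathcal{P}$ is w-cyclic. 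Third, apply Theorem \ref{wgrp} to the w-cyclic planar algebra $\mathcal{P}(R^G \subseteq R^H)$ to conclude that $[H,G]$ is linearly primitive, i.e. there is an irreducible complex representation $V$ of $G$ with $G_{(V^H)} = H$, which is the desired conclusion.

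\smallskip

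The step I expect to require the most care is the identification $\varphi(\mathcal{P}(R^G \subseteq R^H)) = \hat{\varphi}(H,G)$, specifically matching the M\"obius and index data under duality. One must be careful that the totient of the dual planar algebra uses $\mu(e_1,b)$ and $|\id:b|$ (the second formula in Remark \ref{rever}), and check that under the order-reversing identification of $[e_1,\id]$ for $\mathcal{P}(R^G \subseteq R^H)$ with $[H,G]$, the quantity $\mu(e_1,b)|\id:b|$ becomes $\mu(H,K)|G:K|$; the indices must be tracked through the reversal so that $|\id:b| = |G:K|$ rather than $|K:H|$. Everything else is a direct citation: the existence and properties of $\mathcal{P}(R^G \subseteq R^H)$ as an irreducible subfactor planar algebra, Theorem \ref{mainsubfactor} for the w-cyclic conclusion, and Theorem \ref{wgrp} for the translation to linear primitivity. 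Notably the converse fails, as in the group case, so only the stated one-directional implication should be claimed.
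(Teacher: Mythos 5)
Your proposal is correct and follows essentially the same route as the paper: the paper's proof is precisely to apply Theorem \ref{mainsubfactor} to $\mathcal{P}(R^G \subseteq R^H)$, using the identity $\varphi(\mathcal{P}(R^G \subseteq R^H)) = \hat{\varphi}(H,G)$ recorded just above the statement, and then to conclude via Theorem \ref{wgrp}. One harmless slip: what you write as $\mathcal{P}(R^H \subseteq R^G)$ is not an inclusion (since $R^G \subseteq R^H$); the planar algebra whose biprojection lattice is $[H,G]$ in the correct order is $\mathcal{P}(R \rtimes H \subseteq R \rtimes G)$, but the order-reversing Galois correspondence and the index bookkeeping $|b:e_1| = |G:K|$, $\mu(b,\id)=\mu(H,K)$ that you describe are exactly right.
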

\begin{proof} Apply Theorem \ref{mainsubfactor} to $\mathcal{P}(R^G  \subseteq R^H)$, and then Theorem \ref{wgrp}.
\end{proof}
\noindent In particular, let the dual Euler totient of $G$ be $\hat{\varphi}(G):=\hat{\varphi}(1,G)$. Then:
\begin{corollary} \label{coroirr}  A finite group $G$ admits a faithful irreducible complex representation if its dual Euler totient $\hat{\varphi}(G)$ is nonzero.
\end{corollary}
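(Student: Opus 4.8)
The plan is to specialize the general interval result, Corollary \ref{dualEulerthm}, to the case $H = 1$ and then interpret the conclusion of linear primitivity in the full-group setting. The statement $\hat{\varphi}(G) := \hat{\varphi}(1,G)$ is exactly the dual Euler totient of the interval $[1,G]$, so the entire arithmetic hypothesis transfers verbatim: if $\hat{\varphi}(G) \neq 0$, then $\hat{\varphi}(1,G) \neq 0$, and Corollary \ref{dualEulerthm} immediately tells us that the interval $[1,G]$ is linearly primitive.

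The only genuine content of the proof, therefore, is to unpack what linear primitivity of $[1,G]$ means via Definition \ref{linprim}. By that definition, there exists an irreducible complex representation $V$ of $G$ with $G_{(V^{1})} = 1$. First I would observe that the fixed-point subspace $V^{1}$ is all of $V$, since the condition $kw = w$ for all $k$ in the trivial subgroup $\{e\}$ is vacuous (or forced only by $k=e$), giving $V^{1} = V$. Consequently the pointwise stabilizer $G_{(V^{1})} = G_{(V)}$ is precisely the set of $g \in G$ acting trivially on every vector of $V$, which is exactly the kernel of the representation $V$. Thus the equality $G_{(V)} = 1$ says the kernel is trivial, i.e. $V$ is a \emph{faithful} irreducible complex representation.

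Assembling these observations yields the result: $\hat{\varphi}(G) \neq 0$ forces $[1,G]$ linearly primitive, which produces a faithful irreducible $V$. The proof is therefore a one-line deduction followed by the identification $G_{(V^1)} = \ker(V)$.

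I do not expect a serious obstacle here, since all the analytic and lattice-theoretic work is already discharged upstream in Theorem \ref{mainsubfactor} (the main w-cyclicity criterion) and Theorem \ref{wgrp} (the group-theoretic translation of w-cyclicity as linear primitivity). The only point requiring mild care is the degenerate reading of Definition \ref{fixstab} at $K = H = \{e\}$: one must confirm that $V^{\{e\}} = V$ and that the pointwise stabilizer of all of $V$ coincides with the representation's kernel. Once that identification is in place, the corollary is an immediate specialization, so the write-up is essentially: apply Corollary \ref{dualEulerthm} to the interval $[1,G]$ and note that linear primitivity of $[1,G]$ is the existence of a faithful irreducible complex representation.
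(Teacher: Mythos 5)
Your proposal is correct and matches the paper's own proof exactly: the paper also applies Corollary \ref{dualEulerthm} with $H=1$ and concludes via the identification $G_{(V^{1})} = G_{(V)} = \ker(\pi_V)$. Your write-up merely spells out the degenerate case $V^{\{e\}}=V$ that the paper leaves implicit.
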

\begin{proof} It follows from Corollary \ref{dualEulerthm} because $G_{(V)} = \ker(\pi_V)$.
\end{proof}
\noindent It is a purely combinatorial criterion for a finite group to have an irreducible faithful complex representation.
\begin{remark} \label{conv}  The converse is false. The modular maximal-cyclic group $ M_4(2)$ (of order $16$) has a faithful irreducible complex representation, whereas $\hat{\varphi}(M_4(2)) = 0$. This is not surprising because according to Proposition \ref{topinterval}, $\varphi(e_1,\id) \neq 0$ if and only if  $\varphi(t,\id) \neq 0$ with $[t,\id]$ the top interval of $[e_1,\id]$; and the bottom interval of $[1,M_4(2)]$ is $[1,C_2^2]$. Even if we assume that $[1,G]$ is its own bottom interval, the converse is still false: there are exactly two counter-examples of index $\le 100$, given by $G=D_8 \rtimes C_2^2$ or $D_8 \rtimes S_3$ (of order $64$ and $96$ respectively).
\end{remark}
\begin{theorem}[\cite{bu} \S 226] \label{faith}
A complex representation $V$ of a finite group $G$ is faithful if and only if for any irreducible complex representation $W$ there is an integer $n$ such that $W \preceq V^{\otimes n}$.
\end{theorem}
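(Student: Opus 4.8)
The plan is to recast both directions in terms of characters and then settle the nontrivial implication by a Vandermonde argument. Writing $\chi$ for the character of $V$ and $d := \chi(e) = \dim V$, I would use that $V^{\otimes n}$ has character $\chi^n$, so that for an irreducible $W$ with character $\psi$ the multiplicity of $W$ in $V^{\otimes n}$ is
$$ a_n := \langle \chi^n, \psi \rangle = \frac{1}{|G|}\sum_{g \in G} \chi(g)^n\, \overline{\psi(g)}, $$
a nonnegative integer, and that $W \preceq V^{\otimes n}$ is precisely the condition $a_n > 0$. The theorem then reads: $V$ is faithful if and only if for every irreducible $W$ some $a_n$ is positive.

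For the easy direction I would argue as follows. Suppose every irreducible occurs in some tensor power of $V$. If $g \in \ker(\rho_V)$ then $\rho_V(g) = I$, so $g$ acts trivially on each $V^{\otimes n}$ and hence on every subrepresentation; since each irreducible embeds in some $V^{\otimes n}$, the element $g$ acts trivially on all irreducibles, hence on the regular representation, which is faithful. Therefore $g = e$, i.e. $V$ is faithful.

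For the converse, which is the substance, I would fix an irreducible $W$ and run a Vandermonde argument. Let $\alpha_0 = d, \alpha_1, \dots, \alpha_{r-1}$ be the distinct values taken by $\chi$ on $G$, let $C_i := \chi^{-1}(\alpha_i)$ be the fibers, and set $\gamma_i := \sum_{g \in C_i}\overline{\psi(g)}$, so that grouping the sum above by value of $\chi$ gives $a_n = \frac{1}{|G|}\sum_{i=0}^{r-1}\alpha_i^n \gamma_i$. The key point I would establish is that $C_0 = \{e\}$: if $\chi(g) = d$, the $d$ eigenvalues of $\rho_V(g)$ are roots of unity summing to $d$, which by the equality case of the triangle inequality forces each to be $1$, whence $\rho_V(g) = I$ and $g \in \ker(\rho_V) = \{e\}$ by faithfulness. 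Consequently $\gamma_0 = \overline{\psi(e)} = \dim W \neq 0$. Now if one had $a_0 = a_1 = \dots = a_{r-1} = 0$, these $r$ equations would read $\sum_{i=0}^{r-1}\alpha_i^n\gamma_i = 0$ for $n = 0, \dots, r-1$, a homogeneous system whose coefficient matrix $(\alpha_i^n)_{n,i}$ is a Vandermonde in the \emph{distinct} numbers $\alpha_i$, hence invertible; so all $\gamma_i = 0$, contradicting $\gamma_0 = \dim W$. Thus some $a_n$ with $0 \le n \le r-1$ is nonzero, i.e. positive, giving $W \preceq V^{\otimes n}$.

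I expect the only delicate step to be the identification $C_0 = \{e\}$, which is where faithfulness enters and where one must invoke the equality case for a sum of roots of unity of modulus $d$. In particular it would be a mistake to try to argue via $|\chi(g)| < d$ for $g \neq e$ (this fails whenever some $\rho_V(g)$ is a nontrivial scalar), so the argument must use the fiber over the specific value $d = \chi(e)$ rather than a size estimate; this is also why a naive "divide by $d^n$ and take a limit" approach does not work. Everything else — finiteness of the number $r$ of distinct values, invertibility of the Vandermonde matrix, and the integrality and nonnegativity of the multiplicities $a_n$ — is routine.
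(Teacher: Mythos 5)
Your proof is correct and complete. The paper itself gives no argument for this statement---it is quoted as a classical result from Burnside's book (\S 226)---and your Vandermonde argument is precisely the standard proof of this Burnside--Brauer theorem: grouping $\langle \chi^n,\psi\rangle$ by the $r$ distinct values of $\chi$, using faithfulness only to show the fiber over $\chi(e)=d$ is $\{e\}$ (via the equality case for a sum of roots of unity), and invoking invertibility of the Vandermonde matrix in the distinct values. Your handling of the one delicate point is also right: the value $n=0$ is needed exactly for the trivial constituent of $W$, which is consistent with the statement's ``there is an integer $n$'' rather than a positive one.
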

\begin{corollary} \label{gened}
The minimal number of irreducible components for a faithful complex representation of $G$, is at most the minimal length $\ell$ for an ordered chain of subgroups $$\{e\}=H_0 < H_1 < \dots < H_{\ell} = G$$ such that $\hat{\varphi}(H_i, H_{i+1})$ is nonzero.
\end{corollary}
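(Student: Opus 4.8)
The plan is to obtain Corollary \ref{gened} as the representation-theoretic, dual shadow of Theorem \ref{upper}, applied to the fixed-point planar algebra $\mathcal{P}:=\mathcal{P}(R^G \subseteq R)$. First I would fix the dictionary between $\mathcal{P}$ and the subgroup lattice of $G$. By Galois correspondence the biprojection lattice $[e_1,\id]$ of $\mathcal{P}$ is $\mathcal{L}(G)$ equipped with the \emph{reversed} order, since the biprojection $e^R_{R^H}$ attached to the intermediate subfactor $R^H$ shrinks as $H$ grows; thus $e_1 \leftrightarrow G$ and $\id \leftrightarrow \{e\}$. Under this identification an ascending biprojection chain $e_1=b_0<\cdots<b_\ell=\id$ corresponds to a descending subgroup chain $G=H_0>\cdots>H_\ell=\{e\}$, which after reindexing by $K_j:=H_{\ell-j}$ becomes $\{e\}=K_0<\cdots<K_\ell=G$. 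Using $|b:b_i|=|H_i:H|$ for the intermediate index (Definition \ref{index}) and the fact that the M\"obius function of the reversed lattice satisfies $\mu(b,b_{i+1})=\mu(H_{i+1},H)$, a direct computation identifies $\varphi(b_i,b_{i+1})=\hat\varphi(H_{i+1},H_i)=\hat\varphi(K_{\ell-i-1},K_{\ell-i})$, precisely the quantity on the right-hand side. Hence the minimal length of a biprojection chain of $\mathcal{P}$ with every $\varphi(b_i,b_{i+1})$ nonzero equals the minimal length $\ell$ of a subgroup chain with every $\hat\varphi(K_j,K_{j+1})$ nonzero.

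Next I would translate the left-hand side of Theorem \ref{upper}. In $\mathcal{P}_{2,+}(R^G \subseteq R)$ the minimal central projections are exactly the irreducible complex representations of $G$, and the coproduct $*$ corresponds to the tensor product of representations. Consequently, for minimal central projections $p_1,\dots,p_r$ representing irreducibles $W_1,\dots,W_r$, the relation $\langle p_1,\dots,p_r\rangle=\langle \sum_i p_i\rangle=\id$ (Definition \ref{gener}) says that every irreducible occurs in some iterated coproduct, i.e.\ in some tensor power of $\bigoplus_i W_i$; by Burnside's Theorem \ref{faith} this is equivalent to $\bigoplus_i W_i$ being faithful. Thus the minimal number of minimal central projections generating $\id$ equals the minimal number of irreducible components of a faithful representation of $G$. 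It remains to match this central count with the count $r$ of \emph{minimal} projections in Theorem \ref{upper}: Proposition \ref{mini} lets me pass freely between the two, since any family of minimal projections can be replaced by the minimal central projections dominating them without losing generation of $\id$, and conversely each minimal central projection $p$ can be replaced by a dominated minimal projection $u\le p$ with $\langle u\rangle=\langle p\rangle$, so that the two minimal cardinals coincide.

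Combining the two translations, Theorem \ref{upper} applied to $\mathcal{P}(R^G \subseteq R)$ yields exactly the asserted inequality, and I would also note that one can reach the same conclusion more directly along the chain by invoking Theorem \ref{mainsubfactor} and Theorem \ref{wgrp} at each step. The main obstacle I anticipate is not the lattice and index bookkeeping of the first paragraph, which is routine once the reversed-order dictionary is fixed, but justifying the two analytic identifications cleanly: that the minimal central projections of $\mathcal{P}_{2,+}(R^G\subseteq R)$ are precisely the irreducibles of $G$ with coproduct equal to tensor product, and that ``generating $\id$'' is equivalent to faithfulness of the corresponding direct sum. Both are what convert the abstract cardinal $r$ of Theorem \ref{upper} into a representation-theoretic count, and the second rests essentially on Burnside's theorem to turn a tensor-power generation statement into a statement about trivial kernel.
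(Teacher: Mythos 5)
Your proposal is correct and follows essentially the same route as the paper: apply Theorem \ref{upper} to $\mathcal{P}(R^G\subseteq R)$ (whose biprojection lattice is the reversed subgroup lattice, so that $\varphi(b_i,b_{i+1})$ matches $\hat\varphi(H_j,H_{j+1})$), pass from the resulting minimal projections to minimal central projections, identify these with irreducible representations with coproduct given by tensor product, and conclude faithfulness via Burnside's Theorem \ref{faith}. The only cosmetic difference is that you establish a two-way equality of minimal cardinals using Proposition \ref{mini}, whereas the paper needs only the one direction obtained by taking central supports $p_i$ of the $u_i$.
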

\begin{proof} Consider a chain as above. By Corollary \ref{upper} applied to $\mathcal{P}(R^G  \subseteq R)$, we have $\langle u_1, \dots, u_{\ell} \rangle = \id$, with $u_i$ minimal projection. Let $p_i$ be the central support of $u_i$. Then $\langle p_1+ \dots + p_{\ell} \rangle = \id$. But the coproduct of two minimal central projections is given by the tensor product of the associated irreducible representations of $G$ \cite[Corollary 7.5]{p2}. So by Definition \ref{gener} and Theorem \ref{faith}, the representation $V_1 \oplus \cdots \oplus V_{\ell}$ (with $V_i$ the irreducible complex representation $\im(p_i)$) is faithful. \end{proof}

\noindent Note that Corollary \ref{gened} extends to any finite dimensional Kac algebra as for \cite[Remark 6.14]{p2}. It can also be improved by taking for $H_0$ any core-free subgroup of $H_1$ (instead of just $\{e\}$), thanks to \cite[Lemma 6.13]{p2}. In particular:
\begin{corollary} \label{core} A finite group $G$ admits a faithful irreducible complex representation if there is a core-free subgroup $H<G$ with $\hat{\varphi}(H,G)$ nonzero.
\end{corollary}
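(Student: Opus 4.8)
The plan is to produce a faithful irreducible representation directly from the linear primitivity already guaranteed by the hypothesis, using the core-free condition to annihilate the kernel. First I would apply Corollary \ref{dualEulerthm} to the interval $[H,G]$: since $\hat{\varphi}(H,G)$ is nonzero, the interval is linearly primitive, so by Definition \ref{linprim} there is an irreducible complex representation $V$ of $G$ with $G_{(V^H)} = H$. It then remains only to show that this particular $V$ is faithful, at which point we are done.

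Write $\pi_V$ for the associated homomorphism and set $K = \ker(\pi_V)$, a normal subgroup of $G$. The key observation is that $K$ acts trivially on all of $V$, hence in particular fixes every vector of the subspace $V^H$; by Definition \ref{fixstab} this says exactly $K \subseteq G_{(V^H)}$. Combining with linear primitivity gives $K \subseteq G_{(V^H)} = H$. Thus $K$ is a normal subgroup of $G$ contained in $H$, so it lies inside the core $\bigcap_{g \in G} gHg^{-1}$ of $H$ in $G$. Since $H$ is core-free this core is trivial, forcing $K = \{e\}$. Therefore $V$ is a faithful irreducible complex representation of $G$.

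I expect no serious obstacle here: the argument is a short chase once the linear primitivity is in hand, the only points requiring care being the inclusion $K \subseteq G_{(V^H)}$ (immediate, since $V^H \subseteq V$ and $K$ fixes $V$ pointwise) and the use of normality of $K$ to push it into the core. Alternatively, one could route through the improvement of Corollary \ref{gened} noted just above: taking the length-one chain $H = H_0 < H_1 = G$ with $H_0 = H$ core-free and $\hat{\varphi}(H,G) \neq 0$ shows that the minimal number of irreducible components of a faithful complex representation of $G$ is at most $1$, which again yields a faithful irreducible representation. I would nonetheless favour the direct route, as it is self-contained and makes the role of the core-free hypothesis entirely transparent.
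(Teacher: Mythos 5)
Your proof is correct, and it takes a genuinely different route from the paper. The paper does not argue directly: Corollary \ref{core} is presented as an ``in particular'' consequence of the improvement of Corollary \ref{gened}, namely that in the chain criterion one may take $H_0$ to be any core-free subgroup of $H_1$ (this improvement rests on the external reference \cite[Lemma 6.13]{p2}); the length-one chain $H = H_0 < H_1 = G$ with $\hat{\varphi}(H,G) \neq 0$ then bounds the number of irreducible components of a faithful representation by $1$. Your argument instead goes through Corollary \ref{dualEulerthm}: linear primitivity of $[H,G]$ produces an irreducible $V$ with $G_{(V^H)} = H$, and the chase $\ker(\pi_V) \subseteq G_{(V^H)} = H$ together with normality of the kernel and triviality of the core finishes the proof. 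Each step checks out: the inclusion $\ker(\pi_V) \subseteq G_{(V^H)}$ is immediate from Definition \ref{fixstab}, and a normal subgroup contained in $H$ lies in $\bigcap_{g \in G} gHg^{-1}$. What your route buys is self-containedness (no appeal to \cite[Lemma 6.13]{p2} or to the Burnside-type machinery of Theorem \ref{faith} behind Corollary \ref{gened}), plus a slightly sharper conclusion: the very representation witnessing linear primitivity is faithful. What the paper's route buys is generality: the improved Corollary \ref{gened} handles chains of arbitrary length and extends to finite dimensional Kac algebras, with Corollary \ref{core} falling out as the degenerate case. Your second suggested route (the length-one chain) is exactly the paper's intended proof.
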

\noindent This criterion is more efficient than Corollary \ref{coroirr} (consider for example $G$ simple), but it is no more purely combinatorial.
\begin{question} Is the converse of Corollary \ref{core} true?
\end{question}

\section{Additional result for the depth $2$}

We can prove an additional result in the depth $2$ case (involving the central biprojection lattice) coming from the fact that the irreducible depth $2$ subfactor planar algebras correspond to the Kac algebras \cite{kls}. Note that the group subfactors are depth $2$, since the group algebras are examples of Kac algebras.

\begin{theorem}[Splitting, \cite{kls} p39] \label{split} Let $\mathcal{P}$ be an irreducible depth $2$ subfactor planar algebra. Any element $x \in \mathcal{P}_{2,+} $ splits as follows:
$$\begin{tikzpicture}[scale=.5, PAdefn]
	\clip (0,0) circle (2.6cm);
	\draw[shaded] (-0.15,0) -- (100:4cm) -- (80:4cm) -- (0.15,0);
	\draw[shaded] (0,0) -- (-120:4cm) -- (-60:4cm) -- (0,0);
	\node at (0,0) [Tbox, inner sep=1.5mm] {{$x$} };
 \draw[fill=white] (-0.7,-4) .. controls ++(120:3cm) and ++(60:3cm) .. (0.7,-4);	
\end{tikzpicture}  
\hspace{-0.4cm} =  \  	
	\begin{tikzpicture}[scale=.5, PAdefn]
	\clip (0,0) circle (2.6cm);
	\draw[shaded] (-2,0) -- (0,5) -- (2,0) -- (-2,0);
	\draw[fill=white] (-0.7,-0.5) .. controls ++(120:3cm) and ++(60:3cm) .. (0.7,-0.5);
	\draw[shaded] (-1.6,0) -- (-120:4cm) -- (-100:4cm) -- (-1.4,0);
	\draw[shaded] (1.4,0) -- (-80:4cm) -- (-60:4cm) -- (1.6,0);
	\node at (-1.5,0) [Tbox, inner sep=0.2mm] {{$x_{(1)}$} };
	\node at (1.5,0) [Tbox, inner sep=0.2mm] {{$x_{(2)}$} };
\end{tikzpicture} 
\ \text{ and}  
\begin{tikzpicture}[scale=.5, PAdefn]
	\clip (0,0) circle (2.6cm);
	\draw[shaded] (-0.15,0) -- (260:4cm) -- (280:4cm) -- (0.15,0);
	\draw[shaded] (0,0) -- (60:4cm) -- (120:4cm) -- (0,0);
	\node at (0,0) [Tbox, inner sep=1.5mm] {{$x$} };
 \draw[fill=white] (-0.7,4) .. controls ++(240:3cm) and ++(300:3cm) .. (0.7,4);	
\end{tikzpicture} 
\hspace{-0.4cm} =  \ 	
\begin{tikzpicture}[scale=.5, PAdefn]
	\clip (0,0) circle (2.6cm);
	\draw[shaded] (-2,0) -- (0,-5) -- (2,0) -- (-2,0);
	\draw[fill=white] (-0.7,0.5) .. controls ++(240:3cm) and ++(300:3cm) .. (0.7,0.5);
	\draw[shaded] (-1.6,0) -- (-240:4cm) -- (-260:4cm) -- (-1.4,0);
	\draw[shaded] (1.4,0) -- (80:4cm) -- (60:4cm) -- (1.6,0);
	\node at (-1.5,0) [Tbox, inner sep=0.2mm] {{$x_{(1)}$} };
	\node at (1.5,0) [Tbox, inner sep=0.2mm] {{$x_{(2)}$} };
\end{tikzpicture}$$  

\noindent Note that $\Delta(x) = x_{(1)} \otimes x_{(2)}$ is the sumless Sweedler notation for the comultiplication of the corresponding Kac algebra.  
\end{theorem}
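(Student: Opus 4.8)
The plan is to reformulate the statement as the surjectivity of a single juxtaposition map and then to win by a dimension count supplied by the depth $2$ hypothesis. Write $j\colon \mathcal{P}_{2,+} \otimes \mathcal{P}_{2,+} \to \mathcal{P}_{3,+}$ for the linear map defined by the tangle appearing on the right-hand side of the first displayed identity, namely two $2$-boxes placed side by side with the indicated internal cap and matching shading; the free boundary points total to a $3$-box, so $j$ does land in $\mathcal{P}_{3,+}$. With this notation the right-hand side is exactly $j(x_{(1)} \otimes x_{(2)})$, while the left-hand side is a fixed capping tangle $C$ applied to $x \in \mathcal{P}_{2,+}$, landing a priori in $\mathcal{P}_{3,+}$. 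Thus it suffices to show $C(x) \in \im(j)$ for every $x$, and the splitting $\Delta(x) = x_{(1)} \otimes x_{(2)}$ is then read off as the preimage.

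First I would check that $j$ is injective. The juxtaposition tangle contains no turn-backs internal to either factor, so each tensor factor can be recovered from $j(a \otimes b)$ by capping the other factor against a suitable dual element and invoking the nondegeneracy of the Markov trace (sphericality of $\mathcal{P}$). Hence $j$ is injective on the algebraic tensor product, and in particular $\dim \im(j) = (\dim \mathcal{P}_{2,+})^2$.

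Next I would use depth $2$ to compute $\dim \mathcal{P}_{3,+}$. Depth $2$ means the principal graph stabilizes after the second level, so the even part of the Bratteli diagram becomes multiplicative and $\dim(N' \cap M_2) = (\dim(N' \cap M_1))^2$, i.e. $\dim \mathcal{P}_{3,+} = (\dim \mathcal{P}_{2,+})^2$. One checks this on the simplest depth $2$ example $R^{C_2} \subseteq R$ (principal graph $A_3$), where the path counts give $\dim \mathcal{P}_{2,+} = 2$ and $\dim \mathcal{P}_{3,+} = 4$. Comparing with the previous paragraph, $j$ is a linear isomorphism, so every element of $\mathcal{P}_{3,+}$ --- in particular $C(x)$ --- lies in $\im(j)$. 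This yields the first identity; the second follows verbatim from the reflected tangle, or by applying the Fourier rotation $\mathcal{F}$ (equivalently the antipode of the associated Kac algebra) to the first.

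The main obstacle is the dimension equality $\dim \mathcal{P}_{3,+} = (\dim \mathcal{P}_{2,+})^2$: extracting it cleanly from the depth $2$ hypothesis, and verifying that the shadings and the a priori target of the comultiplication tangle $C$ genuinely match $\im(j)$, requires care with the planar-algebra conventions and the inner product rather than a single formal manipulation. An essentially equivalent route that sidesteps the explicit count is to produce a two-sided (Pimsner--Popa) basis for $N \subseteq M$ consisting of elements of $N' \cap M_1$ --- depth $2$ is precisely what guarantees such a basis exists --- and to expand $x$ in it; the two families of basis coefficients then furnish the tensors $x_{(1)}$ and $x_{(2)}$ directly.
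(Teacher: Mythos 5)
The paper itself contains no proof of this statement: Theorem \ref{split} is imported verbatim from Kodiyalam--Landau--Sunder \cite{kls}, so your proposal can only be compared with the standard argument in the literature --- and in outline it essentially \emph{is} that argument. Your reduction is the right one: the theorem asserts exactly that the capped element $C(x)\in\mathcal{P}_{3,+}$ lies in the image of the juxtaposition map $j$, and bijectivity of $j$ is the planar translation of the fact that, for an \emph{irreducible} depth-$2$ subfactor, $N'\cap M_2$ is the basic construction of $\mathbb{C}=N'\cap M\subseteq N'\cap M_1$, whence $\dim(N'\cap M_2)=(\dim(N'\cap M_1))^2$.

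Two steps need tightening, and both hinge on irreducibility, which you never invoke even though the statement and your dimension formula are false without it. First, injectivity: ``each tensor factor can be recovered from $j(a\otimes b)$'' is an assertion about pure tensors and does not give injectivity of a linear map on a tensor product. What does work is the computation $\langle j(a\otimes b),j(c\otimes d)\rangle=\langle a,c\rangle\langle b,d\rangle$ (up to a fixed power of $\delta$): in the closed diagram, $a$ and $c^*$ are glued along three strings and form a one-box, which collapses to a scalar times a single string because $\mathcal{P}_{1,\pm}\cong\mathbb{C}$ --- that collapse is irreducibility, not sphericality or nondegeneracy of the trace; algebraically it is $e_2ye_2=\tr(y)e_2$ for $y\in N'\cap M_1$, valid since $E_M(y)\in N'\cap M=\mathbb{C}$. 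From this factorization, injectivity on the whole tensor product follows by the usual orthonormal-family argument. Second, the dimension count: verifying $\dim\mathcal{P}_{3,+}=(\dim\mathcal{P}_{2,+})^2$ on the single example $A_3$ is not a proof. It follows either by path-counting on the principal graph (using depth $2$ to exclude odd vertices beyond depth $1$, \emph{and} irreducibility to ensure a unique depth-$1$ vertex attached to $*$ by a single edge), or by quoting the characterization that depth $2$ means $N'\cap M_2=(N'\cap M_1)\,e_2\,(N'\cap M_1)$. Note that once you quote the latter, you are finished at once: $\im(j)$ is precisely the span of the elements $ae_2b$, so surjectivity is automatic and the injectivity-plus-dimension detour becomes superfluous. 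Finally, your closing alternative is misstated: the Pimsner--Popa basis that depth $2$ provides inside $N'\cap M_1$ is a basis for $M\subseteq M_1$, not ``for $N\subseteq M$''; as written that sentence does not parse, since a basis for $N\subseteq M$ consists of elements of $M$.
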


\begin{corollary} \label{thmZZ}
If $a,b \in \mathcal{P}_{2,+}$ are central, then so is the coproduct $a * b$.
\end{corollary}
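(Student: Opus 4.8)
The plan is to reduce the assertion to a single trace identity and then play the multiplicativity of the splitting off against the centrality of $a$ and $b$. Since $\mathcal{P}_{2,+}$ is a finite-dimensional von Neumann algebra carrying the faithful tracial state $\tr$, the pairing $(c,y) \mapsto \tr(cy)$ is non-degenerate, so an element $c \in \mathcal{P}_{2,+}$ is central if and only if $\tr(cxy) = \tr(cyx)$ for all $x,y \in \mathcal{P}_{2,+}$: faithfulness turns $cx=xc$ into $\tr((cx-xc)y)=0$ for all $y$, and cyclicity of $\tr$ rewrites $\tr(xcy)$ as $\tr(cyx)$. Thus it suffices to verify this identity for $c=a*b$.

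The key input I would use is the duality between the coproduct $*$ and the comultiplication $\Delta$ with respect to the trace pairing, namely
$$\tr((a*b)x) = (\tr\otimes\tr)\big((a\otimes b)\,\Delta(x)\big) = \tr(a\,x_{(1)})\,\tr(b\,x_{(2)}).$$
This is exactly the statement that $*$ is the convolution product dual to $\Delta$ (the multiplication of the dual Kac algebra transported to $\mathcal{P}_{2,+}$ via $\tr$); it follows from the definition $a*b = \mathcal{F}(\mathcal{F}^{-1}(a)\cdot\mathcal{F}^{-1}(b))$ together with Theorem \ref{split}, and diagrammatically it is the contraction of $a\otimes b$ against the splitting of $x$.

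With this formula in hand I would compute both orderings using that $\Delta$ is a $*$-homomorphism, which here is precisely the content of the Splitting Theorem that $\Delta(x)=x_{(1)}\otimes x_{(2)}$ is the comultiplication of a Kac algebra. Writing $\Delta(x)=\sum_i p_i\otimes q_i$ and $\Delta(y)=\sum_j r_j\otimes s_j$, so that $\Delta(xy)=\sum_{i,j} p_i r_j\otimes q_i s_j$ and $\Delta(yx)=\sum_{i,j} r_j p_i\otimes s_j q_i$, the formula gives
$$\tr((a*b)xy)=\sum_{i,j}\tr(a\,p_i r_j)\,\tr(b\,q_i s_j), \qquad \tr((a*b)yx)=\sum_{i,j}\tr(a\,r_j p_i)\,\tr(b\,s_j q_i).$$
Now centrality enters term by term: if $a$ is central then $u,v\mapsto\tr(a\,uv)$ is itself a trace, since $\tr(a\,uv)=\tr(uva)=\tr(uav)=\tr(a\,vu)$, whence $\tr(a\,p_i r_j)=\tr(a\,r_j p_i)$, and likewise $\tr(b\,q_i s_j)=\tr(b\,s_j q_i)$ because $b$ is central. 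Matching the two sums yields $\tr((a*b)xy)=\tr((a*b)yx)$ for all $x,y$, which is the centrality of $a*b$.

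I expect the only delicate point to be justifying the duality formula relating $*$ and $\Delta$ and confirming that the splitting $\Delta$ is a genuine algebra homomorphism; both are standard in the depth $2$ Kac algebra dictionary, the first being the defining compatibility between a multiplication and the coproduct of its dual, and the second being part of Theorem \ref{split}. Once these are fixed the argument is purely formal, and the entanglement of the two Sweedler legs causes no trouble, since centrality of $a$ (resp.\ $b$) is applied inside the first (resp.\ second) tensor factor of each summand separately.
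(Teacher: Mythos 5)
Your proof is correct, but it takes a genuinely different route from the paper's. The paper's proof (attributed to Vijay Kodiyalam) is a direct diagrammatic computation at the level of elements: the first splitting identity of Theorem \ref{split} is applied to $x$ sitting over the cup of the coproduct diagram, so that $(a*b)\cdot x$ becomes a single picture in which $a$ is stacked against $x_{(1)}$ and $b$ against $x_{(2)}$; centrality of $a$ and $b$ lets each slide past its leg, and the second splitting identity reassembles the picture into $x\cdot(a*b)$. The only inputs are the splitting itself (used twice) and centrality --- no traces, no duality pairing, no multiplicativity of $\Delta$. You instead prove the weak (trace-pairing) form of the same identity: you characterize centrality by $\tr(cxy)=\tr(cyx)$ via faithfulness of $\tr$, and then you need two further pieces of the Kac dictionary, namely the duality formula $\tr((a*b)x)=\tr(a\,x_{(1)})\tr(b\,x_{(2)})$ and the fact that $\Delta$ is an algebra homomorphism. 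Both are standard and both are available from Theorem \ref{split} (the second is implicit in the paper's remark that $\Delta$ is the comultiplication of the corresponding Kac algebra), so your argument is complete in substance; note only that the duality formula may hold up to a fixed power of $\delta$ depending on the normalization convention for the splitting, which is harmless here since the same constant multiplies both $\tr((a*b)xy)$ and $\tr((a*b)yx)$. What your route buys is a purely algebraic argument that exposes the Hopf-theoretic mechanism --- the functional $\bigl(\tr(a\,\cdot)\otimes\tr(b\,\cdot)\bigr)\circ\Delta$ is tracial whenever $a,b$ are central --- and it would work in any setting with such a trace-compatible duality between the two products. What it costs is reliance on two auxiliary facts that themselves require diagrammatic justification, whereas the paper's computation is shorter and self-contained given the splitting.
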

\begin{proof}
 This diagrammatic proof by splitting is due to Vijay Kodiyalam.
$$ \hspace*{0.2cm} (a*b) \cdot x =  \hspace{-0.3cm}
\begin{tikzpicture}[scale=.5, PAdefn]
	\clip (0,0) circle (3cm);
	\draw[shaded] (-0.15,0) -- (100:4cm) -- (80:4cm) -- (0.15,0);
	\draw[shaded] (0,1.2) -- (-130:4cm) -- (-50:4cm) -- (0,1.2);
	\draw[shaded] (0,1.2) -- (-130:4cm) -- (-50:4cm) -- (0,1.2);
	\node at (0,1.2) [Tbox, inner sep=1.35mm] {{$x$} };
	\draw[fill=white] (0,-1.2) circle (0.75cm);
	\node at (-1.2,-1.2) [Tbox, inner sep=1.5mm] {{$a$} };
	\node at (1.2,-1.2) [Tbox, inner sep=1.35mm] {{$b$} };
\end{tikzpicture}  
\hspace{-0.4cm} =     \textbf{\hspace{-0.2cm}}	
	\begin{tikzpicture}[scale=.5, PAdefn]
	\clip (0,0) circle (3cm);
	\draw[shaded] (-2,1) -- (0,5) -- (2,1) -- (-2,1);
	\draw[shaded] (-2,-1) -- (0,-5) -- (2,-1) -- (-2,-1);
	\draw[shaded] (-1.65,-1.2) -- (1.65,-1.2) -- (1.65,1.4) -- (-1.65,1.4) -- (-1.65,-1.2);
	\draw[fill=white] (0,0) ellipse (0.8cm and 2.3cm);
	\node at (-1.2,-1.2) [Tbox, inner sep=1.5mm] {{$a$} };
	\node at (1.2,-1.2) [Tbox, inner sep=1.35mm] {{$b$} };
	\node at (-1.2,1.4) [Tbox, inner sep=0.1mm] {{$x_{(1)}$} };
	\node at (1.2,1.4) [Tbox, inner sep=0.1mm] {{$x_{(2)}$} };
\end{tikzpicture} 
\hspace{-0.1cm} =    \hspace{-0.2cm}	
	\begin{tikzpicture}[scale=.5, PAdefn]
	\clip (0,0) circle (3cm);
	\draw[shaded] (-2,1) -- (0,5) -- (2,1) -- (-2,1);
	\draw[shaded] (-2,-1) -- (0,-5) -- (2,-1) -- (-2,-1);
	\draw[shaded] (-1.65,-1.2) -- (1.65,-1.2) -- (1.65,1.4) -- (-1.65,1.4) -- (-1.65,-1.2);
	\draw[fill=white] (0,0) ellipse (0.8cm and 2.3cm);
	\node at (-1.2,-1.2) [Tbox, inner sep=0.1mm] {{$x_{(1)}$} };
	\node at (1.2,-1.2) [Tbox, inner sep=0.1mm] {{$x_{(2)}$} };
	\node at (-1.2,1.4) [Tbox, inner sep=1.5mm] {{$a$} };
	\node at (1.2,1.4) [Tbox, inner sep=1.35mm] {{$b$} };
\end{tikzpicture} 
  \hspace{-0.05cm} =  \hspace{-0.3cm}
\begin{tikzpicture}[scale=.5, PAdefn]
	\clip (0,0.2) circle (3cm);
	\draw[shaded] (-0.15,2) -- (-80:4.2cm) -- (-100:4.2cm) -- (0.15,2);
	\draw[shaded] (0,-1) -- (50:5cm) -- (130:5cm) -- (0,-1);
	\node at (0,-1) [Tbox, inner sep=1.35mm] {{$x$} };
	\draw[fill=white] (0,1.4) circle (0.75cm);
	\node at (-1.2,1.4) [Tbox, inner sep=1.5mm] {{$a$} };
	\node at (1.2,1.4) [Tbox, inner sep=1.35mm] {{$b$} };
\end{tikzpicture}  
\hspace{-0.3cm} = x \cdot (a*b) \hspace*{0.2cm} \qedhere $$
 \end{proof}
 
 \begin{corollary}
 The set of central biprojections is a sublattice of $[e_1, \id]$.
 \end{corollary}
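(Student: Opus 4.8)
The plan is to show that the set of central biprojections is closed under both the meet $\wedge$ and the join $\vee$ operations of the lattice $[e_1,\id]$, which suffices to prove it is a sublattice. Throughout I would use that a projection $b \in \mathcal{P}_{2,+}$ is a biprojection precisely when it is closed under the coproduct (Definition \ref{gener} with Theorem \ref{bisch}), and that centrality is a linear and multiplicative condition on $\mathcal{P}_{2,+}$.

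First I would handle the meet. Given two central biprojections $b_1,b_2$, their meet in the biprojection lattice is the range projection of $b_1 \wedge b_2$ in the usual sense; since $b_1,b_2$ are commuting projections lying in the center of $\mathcal{P}_{2,+}$, their product (and hence their meet as projections) is again central, and it is a biprojection because the meet of biprojections is a biprojection (the intersection of intermediate subfactors is an intermediate subfactor, Theorem \ref{bisch}). So the meet stays in the set.

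The join is where Corollary \ref{thmZZ} does the real work. The join $b_1 \vee b_2$ is the biprojection $\langle b_1 + b_2 \rangle$ generated by $b_1+b_2$, which by Definition \ref{gener} is the range projection of a sum of coproduct powers $\sum_k (b_1+b_2)^{*k}$. Expanding each such power as a sum of iterated coproducts of $b_1$'s and $b_2$'s, Corollary \ref{thmZZ} tells us that the coproduct of two central elements is central; by induction every term in the expansion is central, so each partial sum is central, and therefore its range projection is central. Hence $b_1 \vee b_2$ is a central biprojection. I would also note at the outset that $e_1$ and $\id$ are central biprojections, so the subset is nonempty and contains the bounds.

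The main obstacle, and the only genuinely subtle point, is the join computation: one must be careful that taking range projections and passing to the stabilized limit in Definition \ref{gener} preserves centrality. The cleanest way around this is to observe that the range projection of a central positive element is central (the range projection lies in the von Neumann algebra generated by the element, which sits in the center), so once every partial sum $\sum_{k=1}^n (b_1+b_2)^{*k}$ is shown central via Corollary \ref{thmZZ} and the linearity of centrality, its range projection $p_n$ is central, and the stabilized value $\langle b_1+b_2\rangle = p_N$ is therefore central as well.
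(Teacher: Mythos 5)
Your proof is correct and follows essentially the same route as the paper: the meet is disposed of by noting that the meet of commuting central projections is central, and the join is reduced to Corollary \ref{thmZZ} applied to coproducts of $b_1$ and $b_2$. The only cosmetic difference is that you realize the join as $\langle b_1+b_2\rangle$ and expand $\sum_k (b_1+b_2)^{*k}$ by bilinearity and induction, whereas the paper writes it directly as the range projection of $(b_1 * b_2)^{*k}$ for $k$ large enough; both hinge on the same lemma, and your explicit observation that the range projection of a central positive element is again central makes precise a step the paper leaves implicit.
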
  
 \begin{proof}
 Let $b_1$, $b_2$ be central biprojections. Then $b_1 \wedge b_2$ is central, and $b_1 \vee b_2$ is the range projection of $(b_1 * b_2)^{*k}$ for $k$ large enough, so is central by Corollary \ref{thmZZ}.
 \end{proof}

Let $\mathcal{C}$ be the  central biprojection lattice and $\mu_{\mathcal{C}}$ its M\"obius function. Let the central Euler totient of $\mathcal{P}$ be $$\varphi_{\mathcal{C}}(\mathcal{P}):=\sum_{b \in \mathcal{C}} \mu_{\mathcal{C}}(b,\id)|b:e_1|.$$
\begin{theorem} \label{cent} Let $p_1, \dots , p_r$ be the minimal central projections of $\mathcal{P}_{2,+}$. Then
$$\varphi_{\mathcal{C}}(\mathcal{P}) = \delta^2\sum_{\langle p_i \rangle = \id} \tr(p_i).$$
\end{theorem}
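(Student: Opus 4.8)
The plan is to run the computation of Proposition \ref{central} verbatim, but inside the central sublattice $\mathcal{C}$ rather than the full lattice $[e_1,\id]$. The whole point is that \emph{every} element of $\mathcal{C}$ is by definition a central projection, so the step in Proposition \ref{central} that needed the extra hypothesis ``every biprojection is central'' becomes automatic once we restrict attention to $\mathcal{C}$, which is a finite lattice with top $\id$ by the corollary just proved.

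First I would clear the index: since $\tr(e_1)=\delta^{-2}$ we have $|b:e_1|=\delta^2\tr(b)$, so $\varphi_{\mathcal{C}}(\mathcal{P})=\delta^2\sum_{b\in\mathcal{C}}\mu_{\mathcal{C}}(b,\id)\tr(b)$. Let $b_1,\dots,b_n$ be the coatoms of $\mathcal{C}$ and apply the Crosscut Theorem \ref{crosscut} to $\mathcal{C}$ with its order-reversing map $m:\mathcal{B}_n\to\mathcal{C}$, $m(I)=\bigwedge_{i\in I}b_i$. Because $\mathcal{C}$ is a sublattice, these meets agree with the meets in $[e_1,\id]$, and for biprojections the lattice meet is the projection meet; as the $b_i$ are central they commute, so in fact $m(I)=\prod_{i\in I}b_i$. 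Substituting $\mu_{\mathcal{C}}(b,\id)=\sum_{I\in m^{-1}(\{b\})}(-1)^{|I|}$ and summing over $b$ collapses everything to $\varphi_{\mathcal{C}}(\mathcal{P})=\delta^2\sum_{I\in\mathcal{B}_n}(-1)^{|I|}\tr(m(I))$.

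Next, inclusion–exclusion for the commuting central projections $b_i$ gives $\sum_{I}(-1)^{|I|}\tr(m(I))=\tr\bigl(\prod_i(\id-b_i)\bigr)=\tr\bigl(\id-\bigvee_i b_i\bigr)$, where $\bigvee_i b_i=R[\sum_i b_i]$ is the central range projection of the sum of the coatoms. To evaluate the trace of the central projection $q:=\id-\bigvee_i b_i$, I would decompose $\id=\sum_j p_j$ into minimal central projections; since $q$ is central and each $p_j$ is minimal central, $p_j q\in\{0,p_j\}$, whence $\tr(q)=\sum_{p_j q=p_j}\tr(p_j)$, and $p_j q=p_j$ holds precisely when $p_j\not\le b_i$ for every coatom $b_i$.

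It then remains to identify $\{j : p_j\not\le b_i \ \forall i\}$ with $\{j:\langle p_j\rangle=\id\}$, and this is the one genuinely new ingredient — the place where the depth $2$ hypothesis is used and where I expect the only real difficulty. By Corollary \ref{thmZZ} the coproduct of central elements is central, so $\langle p_j\rangle$, being the range projection of $\sum_k p_j^{*k}$, is itself a central biprojection, i.e. lies in $\mathcal{C}$. Hence $\langle p_j\rangle=\id$ if and only if $\langle p_j\rangle$ sits below no coatom of $\mathcal{C}$; and since $\langle p_j\rangle$ is the smallest biprojection $\succeq p_j$ we have $\langle p_j\rangle\le b_i\Leftrightarrow p_j\le b_i$, so this is exactly the condition $p_j\not\le b_i$ for all $i$. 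This matches the two index sets and yields $\varphi_{\mathcal{C}}(\mathcal{P})=\delta^2\sum_{\langle p_j\rangle=\id}\tr(p_j)$. The main obstacle is thus to nail down that $\langle p_j\rangle$ stays inside $\mathcal{C}$ (essentially the role of depth $2$) and that the ``below a coatom'' condition transfers cleanly between $p_j$ and $\langle p_j\rangle$ via the dichotomy for minimal central projections.
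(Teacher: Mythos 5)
Your proposal is correct and takes essentially the same route as the paper: the paper's proof of Theorem \ref{cent} consists precisely of the observation (via Corollary \ref{thmZZ}) that a central projection generates a central biprojection, followed by running the computation of Proposition \ref{central} verbatim inside $\mathcal{C}$. Your expanded version — crosscut theorem on the coatoms of $\mathcal{C}$, inclusion--exclusion for the commuting central coatoms, the dichotomy $p_jq\in\{0,p_j\}$, and the identification $\langle p_j\rangle\le b_i\Leftrightarrow p_j\le b_i$ with $\langle p_j\rangle\in\mathcal{C}$ as the one genuinely new ingredient — is exactly the intended argument, with the details correctly filled in.
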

\begin{proof} By Corollary \ref{thmZZ}, a central projection generates a central biprojection. The rest of the proof is identical to that of Proposition \ref{central}.
\end{proof}

\begin{corollary} \label{depth2} Let $\mathcal{P}$ be an irreducible subfactor planar algebra of depth $2$. Then $\mathcal{P}$ is w-cyclic if and only if $\varphi_{\mathcal{C}}(\mathcal{P})$ is nonzero.
\end{corollary}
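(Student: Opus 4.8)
The plan is to read the equivalence off directly from Theorem \ref{cent}, the only genuine content being a positivity argument that rules out cancellation in the defining sum. Since Corollary \ref{depth2} concerns the depth $2$ case, Theorem \ref{cent} applies (it rests on Corollary \ref{thmZZ}, which is the depth $2$ splitting computation), so no extra hypotheses are needed.

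First I would invoke Theorem \ref{cent} to rewrite the central Euler totient as
$$\varphi_{\mathcal{C}}(\mathcal{P}) = \delta^2\sum_{\langle p_i \rangle = \id} \tr(p_i),$$
where $p_1, \dots , p_r$ are the minimal central projections of $\mathcal{P}_{2,+}$. This already reduces the whole statement to understanding when the right-hand side is nonzero.

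The crucial observation is then a positivity one: every summand is strictly positive, since $\delta^2 > 0$ and each $p_i$ is a nonzero projection, so $\tr(p_i) > 0$ for the faithful normalized trace. Consequently the right-hand side is a sum of strictly positive real numbers indexed by $\{\, i \mid \langle p_i \rangle = \id \,\}$, so it vanishes exactly when this index set is empty and is nonzero exactly when the index set is nonempty; there is no possibility of cancellation. Finally I would match this with the definition of w-cyclic: by its second equivalent formulation, $\mathcal{P}$ is w-cyclic if and only if there exists a minimal central projection $p$ with $\langle p \rangle = \id$, that is, if and only if $\{\, i \mid \langle p_i \rangle = \id \,\}$ is nonempty. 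Combining the two equivalences yields the claim.

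I do not expect a real obstacle here: Theorem \ref{cent} performs all the combinatorial and diagrammatic work, and the only new step is the elementary no-cancellation remark, whose validity hinges solely on the strict positivity of the trace on nonzero projections. The one point worth stating explicitly is precisely that positivity, to justify that the sum is nonzero if and only if it is nonempty.
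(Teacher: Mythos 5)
Your proof is correct and matches the paper's intent exactly: the paper states Corollary \ref{depth2} as an immediate consequence of Theorem \ref{cent}, and the argument you spell out (faithfulness of the trace forcing the sum $\delta^2\sum_{\langle p_i \rangle = \id} \tr(p_i)$ to vanish precisely when no minimal central projection generates $\id$, combined with the second equivalent formulation of w-cyclic) is the implicit reasoning. Nothing is missing; you have simply made explicit the positivity step the paper leaves to the reader.
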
 
\begin{remark} The last three results extend to any irreducible subfactor planar algebra in which any central projection generates a central biprojection.
\end{remark}
 The rest is only translation. Let $G$ be a finite group, $\mathcal{N}(G)$ its normal subgroup lattice and $\mu_{\mathcal{N}}$ the M\"obius function of $\mathcal{N}(G)$. Let the dual normal Euler totient be
$$ \hat{\varphi}_{\mathcal{N}}(G) := \sum_{H \in \mathcal{N} (G)}\mu_{\mathcal{N}}(1,H) |G:H| $$
\begin{corollary} A finite group $G$ has a faithful irreducible complex representation if and only if $\hat{\varphi}_{\mathcal{N}}(G)$ is nonzero.
\end{corollary}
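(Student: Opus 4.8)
The plan is to apply the depth-$2$ results to the fixed-point planar algebra $\mathcal{P} := \mathcal{P}(R^G \subseteq R)$, which is irreducible of depth $2$ since group subfactors are depth $2$. By Corollary \ref{depth2}, $\mathcal{P}$ is w-cyclic if and only if $\varphi_{\mathcal{C}}(\mathcal{P})$ is nonzero, so everything reduces to two translations into group language: that w-cyclicity of $\mathcal{P}$ is equivalent to $G$ admitting a faithful irreducible complex representation, and that $\varphi_{\mathcal{C}}(\mathcal{P}) = \hat{\varphi}_{\mathcal{N}}(G)$.

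The first translation is immediate from Theorem \ref{wgrp} with $H = 1$: the interval $[1,G]$ is linearly primitive (Definition \ref{linprim}) exactly when there is an irreducible complex representation $V$ with $G_{(V)} = 1$ (using $V^1 = V$), and since $G_{(V)} = \ker(\pi_V)$ as in the proof of Corollary \ref{coroirr}, this means $V$ is faithful. Hence $\mathcal{P}$ is w-cyclic if and only if $G$ has a faithful irreducible complex representation.

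The second translation is the substance of the proof. Realizing $\mathcal{P}_{2,+} \cong \mathbb{C}[G]$, the Bisch correspondence (Theorem \ref{bisch}) attaches to each subgroup $H \le G$ the biprojection $b_H := \frac{1}{|H|}\sum_{h \in H} u_h$, giving an order-reversing bijection $[e_1,\id] \to \mathcal{L}(G)$ with $e_1 \leftrightarrow G$ and $\id \leftrightarrow \{e\}$. Now $b_H$ is central in $\mathbb{C}[G]$ precisely when $H$ is a union of conjugacy classes, i.e. when $H \trianglelefteq G$, so the central biprojection lattice $\mathcal{C}$ (which forms a sublattice of $[e_1,\id]$) is anti-isomorphic to $\mathcal{N}(G)$. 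The order reversal sends the interval $[b_H,\id]$ of $\mathcal{C}$ to $[1,H]$ of $\mathcal{N}(G)$, hence $\mu_{\mathcal{C}}(b_H,\id) = \mu_{\mathcal{N}}(1,H)$; moreover $|b_H : e_1| = \tr(b_H)/\tr(e_1) = (1/|H|)/(1/|G|) = |G:H|$. Summing term by term yields
$$\varphi_{\mathcal{C}}(\mathcal{P}) = \sum_{b \in \mathcal{C}} \mu_{\mathcal{C}}(b,\id)\,|b:e_1| = \sum_{H \in \mathcal{N}(G)} \mu_{\mathcal{N}}(1,H)\,|G:H| = \hat{\varphi}_{\mathcal{N}}(G),$$
and combining this with the first translation and Corollary \ref{depth2} proves the stated equivalence.

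The main obstacle is the bookkeeping of this order reversal: one must check that the central biprojections are exactly the $b_H$ with $H$ normal, and that the anti-isomorphism pairs $\mu_{\mathcal{C}}(b_H,\id)$ with $\mu_{\mathcal{N}}(1,H)$ rather than $\mu_{\mathcal{N}}(H,G)$. A clean independent cross-check comes from Theorem \ref{cent}: the minimal central projections $p_i$ of $\mathbb{C}[G]$ index the irreducible representations $V_i$, with $\langle p_i \rangle = \id$ iff $V_i$ is faithful (Theorem \ref{faith}) and $\delta^2 \tr(p_i) = (\dim V_i)^2$, so Theorem \ref{cent} gives $\varphi_{\mathcal{C}}(\mathcal{P}) = \sum_{V_i \text{ faithful}} (\dim V_i)^2$. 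This is nonzero exactly when a faithful irreducible representation exists, which both confirms the value of $\hat{\varphi}_{\mathcal{N}}(G)$ and independently establishes the corollary.
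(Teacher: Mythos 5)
Your proposal is correct and takes essentially the same approach as the paper, which proves the corollary by applying Corollary \ref{depth2} to $\mathcal{P}(R^G \subseteq R)$, or alternatively Theorem \ref{cent} with $V_i = \im(p_i)$ to get $\hat{\varphi}_{\mathcal{N}}(G) = \sum_{V_i \text{ faithful}} \dim(V_i)^2$ --- the same two routes you give. The only difference is that you spell out the translation bookkeeping (the anti-isomorphism $\mathcal{C} \cong \mathcal{N}(G)$, the matching of $\mu_{\mathcal{C}}(b_H,\id)$ with $\mu_{\mathcal{N}}(1,H)$, and the equivalence of w-cyclicity with faithfulness via Theorem \ref{wgrp}), which the paper compresses into ``the rest is only translation.''
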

\begin{proof}
Apply  Corollary \ref{depth2} on $\mathcal{P}(R^G \subseteq R)$; or Theorem \ref{cent}  with $V_i=\im(p_i)$ then
$$ \hspace*{3.87cm} \hat{\varphi}_{\mathcal{N}}(G) = \sum_{V_i \text{ faithful}} \dim(V_i)^2. \hspace*{3.87cm} \qedhere $$
\end{proof}
\noindent The above equality can be proved directly from the content of the page 97 in \cite{palfy}.

\section{Acknowledgments} 
The author would like to thank the Isaac  Newton  Institute  for  Mathematical  Sciences,  Cambridge,  for  support and hospitality  during  the  programme  \textit{Operator  Algebras:  Subfactors  and  their  applications}, where work on this paper was undertaken. This work was supported by EPSRC grant no EP/K032208/1; thanks to David Evans for the invitation; an anonymous referee for her/his interest in this work and useful suggestions; Richard Stanley for showing the crosscut theorem; John Shareshian for Kratzer-Th\'evenaz theorem and Eulerian lattices; Benjamin Steinberg and Frieder Ladisch for P\'alfy's formula.

\begin{bibdiv}
\begin{biblist}
\bib{bp}{article}{
   author={Balodi, Mamta},
   author={Palcoux, Sebastien},
   title={On Boolean intervals of finite groups},
   journal={J. Comb. Theory, Ser. A}
   volume={157}
   pages={49-69},
   date={2018},
   doi={10.1016/j.jcta.2018.02.004}, 
} 
\bib{bi}{article}{
 author={Bisch, Dietmar},
 title={A note on intermediate subfactors},
 journal={Pacific J. Math.},
 volume={163},
 date={1994},
 number={2},
 pages={201--216},
 issn={0030-8730},
 review={\MR{1262294 (95c:46105)}},
 doi={10.2140/pjm.1994.163.201}
}
\bib{bu}{book}{
   author={Burnside, William},
   title = {Theory of groups of finite order, \emph{2d ed}},
   publisher={Cambridge University Press},
   date={1911},
   pages={xxiv+512},
}
\bib{hal}{article}{
    author = {Philip, Hall},
    title = {The Eulerian functions of a group.},
    journal = {Q. J. Math., Oxf. Ser.},
    volume = {7},
    pages = {134--151},
    year = {1936},
}
\bib{jo}{article}{
 author={Jones, Vaughan F. R.},
 title={Actions of finite groups on the hyperfinite type $ {\rm II}_{1} $ \
 factor},
 journal={Mem. Amer. Math. Soc.},
 volume={28},
 date={1980},
 number={237},
 pages={v+70},
 issn={0065-9266},
 review={\MR{587749 (81m:46094)}},
 doi={10.1090/memo/0237},
}
\bib{jo2}{article}{
 author={Jones, Vaughan F. R.},
 title={Index for subfactors},
 journal={Invent. Math.},
 volume={72},
 date={1983},
 number={1},
 pages={1--25},
 doi={10.1007/BF01389127},
}
\bib{js}{book}{
 author={Jones, Vaughan F. R.},
 author={Sunder, V. S.},
 title={Introduction to subfactors},
 series={London Mathematical Society Lecture Note Series},
 volume={234},
 publisher={Cambridge University Press, Cambridge},
 date={1997},
 pages={xii+162},
 isbn={0-521-58420-5},
 doi={10.1017/CBO9780511566219},
}
\bib{jo4}{article}{
 author={Jones, Vaughan F. R.},
 title={Planar algebras, I},
 date={1999},
 pages={122pp},
 journal={arXiv:math/9909027}, 
 note={to appear in New Zealand Journal of Mathematics},
}
\bib{sk2}{article}{
 author={Kodiyalam, Vijay},
 author={Sunder, V. S.},
 title={On Jones' planar algebras},
 journal={J. Knot Theory Ramifications},
 volume={13},
 date={2004},
 number={2},
 pages={219--247},
 issn={0218-2165},
 review={\MR{2047470 (2005e:46119)}},
 doi={10.1142/S021821650400310X},
}
\bib{kls}{article}{
   author={Kodiyalam, Vijay},
   author={Landau, Zeph},
   author={Sunder, V. S.},
   title={The planar algebra associated to a Kac algebra},
   journal={Proc. Indian Acad. Sci. Math. Sci.},
   volume={113},
   date={2003},
   number={1},
   pages={15--51},
   doi={10.1007/BF02829677},
}
\bib{kt}{article}{
   author={Kratzer, Charles},
   author={Th\'evenaz, Jacques},
   title={Fonction de M\"obius d'un groupe fini et anneau de Burnside},
   journal={Commentarii Mathematici Helvetici},
   year={1984},
   volume={59},
   number={1},
   pages={425--438},
}
\bib{li}{article}{
 author={Liu, Zhengwei},
 title={Exchange relation planar algebras of small rank},
 journal={Trans. Amer. Math. Soc.},
 volume={368},
 date={2016},
 number={12},
 pages={8303--8348},
 issn={0002-9947},
 review={\MR{3551573}},
 doi={10.1090/tran/6582},
}
\bib{nk}{article}{
 author={Nakamura, Masahiro},
 author={Takeda, Zir{\^o}},
 title={On the fundamental theorem of the Galois theory for finite
 factors. },
 journal={Proc. Japan Acad.},
 volume={36},
 date={1960},
 pages={313--318},
 issn={0021-4280},
 review={\MR{0123926 (23 \#A1247)}},
}
\bib{pa}{article}{
 author={Palcoux, Sebastien},
 title={Ore's theorem for cyclic subfactor planar algebras and applications},
 pages={50pp},
 journal={arXiv:1505.06649v10}
}
\bib{p1}{article}{
   author={Palcoux, Sebastien},
   title={Ore's theorem for cyclic subfactor planar algebras and beyond},
   journal={Pacific Journal of Mathematics},
   volume={292},
   number={1},
   date={2018},
   pages={203--221},
   doi={10.2140/pjm.2018.292.203}, 
}
\bib{p2}{article}{
   author={Palcoux, Sebastien},
   title={Ore's theorem on subfactor planar algebras},
   pages={14},
   eprint={arXiv:1704.00745v3}
   note={Under review}
}
\bib{p4}{article}{
   author={Palcoux, Sebastien},
   title={Dual Ore's theorem on distributive intervals of finite groups},
   pages={8},
   eprint={arXiv:1708.02565}
   note={Accepted in the Journal of Algebra}
}
\bib{palfy}{article}{
   author={P\'alfy, P\'eter P\'al},
   title={On faithful irreducible representations of finite groups},
   journal={Studia Sci. Math. Hungar.},
   volume={14},
   date={1979},
   number={1-3},
   pages={95--98},
   review={},
   doi={},
}
\bib{sw}{article}{
author = {Shareshian, John},
author = {Woodroofe, Russ},
title = {Order complexes of coset posets of finite groups are not contractible},
journal = {Advances in Mathematics},
volume ={291},
pages = {758--773},
date = {2016},
issn = {0001-8708},
doi = {10.1016/j.aim.2015.10.018},
}
\bib{sta}{book}{
   author={Stanley, Richard P.},
   title={Enumerative combinatorics. Volume 1},
   series={Cambridge Studies in Advanced Mathematics},
   volume={49},
   edition={2},
   publisher={Cambridge University Press, Cambridge},
   date={2012},
   pages={xiv+626},
   isbn={978-1-107-60262-5},
   review={\MR{2868112}}
}
\bib{wa}{article}{
 author={Watatani, Yasuo},
 title={Lattices of intermediate subfactors},
 journal={J. Funct. Anal.},
 volume={140},
 date={1996},
 number={2},
 pages={312--334},
 issn={0022-1236},
 review={\MR{1409040 (98c:46134)}},
 doi={10.1006/jfan.1996.0110},
}
\bib{xu}{article}{
title = {On representing some lattices as lattices of intermediate subfactors of finite index},
journal = {Advances in Mathematics},
volume = {220},
number = {5},
pages = {1317 - 1356},
year = {2009},
issn = {0001-8708},
doi = {10.1016/j.aim.2008.11.006},
author = {Feng Xu}
}
\end{biblist}
\end{bibdiv}
\end{document}